\pgfplotsset{compat=1.17}
\newtheorem{theorem}{Theorem}[section]
\newtheorem{corollary}[theorem]{Corollary}
\newtheorem{definition}[theorem]{Definition}
\newtheorem{example}[theorem]{Example}
\newtheorem{lemma}[theorem]{Lemma}
\newtheorem{proposition}[theorem]{Proposition}
\newtheorem{remark}[theorem]{Remark}
\newcommand{\C}{\mathbb{C}}        
\title[\(\mu\)-Hankel Operators on Non-Abelian Compact Lie Groups]{\(\mu\)-Hankel Operators on Non-Abelian Compact Lie Groups}
\author[E. Sulaver]{Emma Sulaver}
\address[E. Sulaver]{Department of Mathematical and Statistical Sciences, Unoversity of Alberta, Edmonton, Alberta, Canada}
\email{emmasulaver@gmail.com, esulaver@ualberta.ca}
\subjclass[2020]{Primary: 43A30, 47B35, 47B10, 22E30; Secondary: 22E46.}
\keywords{\(\mu\)-Hankel operators; compact Lie groups; non-commutative harmonic analysis; Schatten classes; Fredholm index}
\begin{document}
\sloppy
\maketitle

\begin{abstract}
We introduce and study a natural non-commutative generalization of µ-Hankel operators originally defined on Hardy spaces over compact abelian groups.  Within the framework of Peter–Weyl theory, we define matrix-valued Hankel operators associated to pairs of irreducible representations and weight functions, then establish sharp boundedness and compactness criteria in terms of symbol decay.  We characterize membership in Schatten–von Neumann ideals and compute Fredholm indices in key cases.  Finally, we initiate the inverse problem of symbol recovery by spectral data, proving uniqueness and stability under mild assumptions.  Several illustrative examples on $\mathrm{SU}(2)$ and tori are worked out in detail.
\end{abstract}

\section{Introduction}

Hankel operators have long played a central role in operator theory and complex analysis, originating in the study of moment problems and Nehari’s theorem on the unit circle \cite{Nehari1957,Peller2003}.  In the classical setting of the Hardy space $H^2(\mathbb{T})$, a Hankel operator with symbol $\varphi\in L^\infty(\mathbb{T})$ is defined by
\[
    H_\varphi f = P_-\bigl(\varphi f\bigr),
\]
where $P_-$ denotes the orthogonal projection onto the anti-analytic functions.  Such operators are characterized by their intimate connections to Carleson measures \cite{Luecking1991}, and their membership in Schatten–von Neumann ideals $\mathcal{S}_p$ has been completely described in terms of the smoothness and decay of $\varphi$ \cite{Peller2003, Hashemi2025}.

This theory was extended to compact abelian groups $G$ by exploiting Fourier–Pontryagin duality: characters $\chi\in\widehat G$ replace Fourier modes, and one defines $\mu$–Hankel operators by introducing weight functions on the dual group \cite{Mirotin2023}.  In particular, Mirotin’s recent monograph develops boundedness and compactness criteria for these weighted Hankel operators, as well as their Fredholm properties in the unimodular case \cite{Mirotin2023}.  However, the restriction to abelian $G$ leaves unexplored a wealth of phenomena arising from non-commutativity, where one must contend with matrix-valued symbols and multi-dimensional representation spaces.

The jump to non-abelian compact Lie groups is both natural and challenging.  By Peter–Weyl theory \cite{Folland2016,RuzhanskyTurunen2010}, one has an orthonormal basis of matrix coefficients of irreducible representations, but these representations are higher-dimensional, and the classical scalar techniques break down.  Early work on operator theory in this non-commutative setting has focused on pseudo-differential operators and convolution operators \cite{Gosson2011}, yet a systematic study of Hankel-type operators on non-abelian groups remains absent.  In particular, the definition, boundedness, and spectral properties of Hankel operators weighted by representation-theoretic data have not been addressed.

In this paper, we fill this gap by introducing \emph{µ-Hankel operators} on non-abelian compact Lie groups.  Using the framework of Peter–Weyl decomposition, we define matrix-valued weight functions $\mu(\pi)$ and symbols $a(\pi,\rho)\in\mathbb{C}^{d_\pi\times d_\rho}$ for irreducible representations $\pi,\rho$.  
This development not only subsumes the abelian theory as a special case but also uncovers novel spectral phenomena arising from non-commutativity.  We illustrate our results with detailed examples on $\mathrm{SU}(2)$ and product groups, and conclude by outlining further directions, including extensions to non-compact and time–frequency settings.

\section{Preliminaries}

In this section we collect the basic definitions, notation, and fundamental results that will be used throughout the paper.

\begin{definition}[Compact Lie group]
A \emph{compact Lie group} \(G\) is a group that is also a finite-dimensional smooth manifold, such that the group operations are smooth, and \(G\) is compact as a topological space.  
\end{definition}

\begin{definition}[Unitary dual]
Let \(\widehat G\) denote the \emph{unitary dual} of \(G\), i.e.\ the set of equivalence classes of irreducible unitary representations \(\pi\colon G\to U(d_\pi)\).  For each \(\pi\in\widehat G\), write \(d_\pi=\dim\pi\).  
\end{definition}

\begin{theorem}[Peter–Weyl {\cite[Ch.\,2]{Folland2016}}\label{thm:PeterWeyl}]
As a unitary representation of \(G\times G\) by left– and right–translation, 
\[
  L^2(G)\;\cong\;\bigoplus_{\pi\in\widehat G} \,V_\pi\otimes V_\pi^*,
\]
and the matrix coefficients \(\{\sqrt{d_\pi}\,\pi_{ij}:1\le i,j\le d_\pi,\ \pi\in\widehat G\}\) form an orthonormal basis of \(L^2(G)\).
\end{theorem}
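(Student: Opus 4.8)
\emph{Proof strategy.} The plan is to derive the statement from the spectral theory of compact self–adjoint operators together with the Schur orthogonality relations; the Lie structure of $G$ plays no role here beyond guaranteeing that $G$ is a compact group. Normalize Haar measure so that $|G| = 1$, and write $L$, $R$ for the left and right regular representations of $G$ on $L^2(G)$.

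The analytic core is to manufacture enough finite–dimensional invariant subspaces. For $k \in C(G)$ with $k(x^{-1}) = \overline{k(x)}$, I would show that the right–convolution operator $R_k f = f * k$, $(f * k)(x) = \int_G f(y)\,k(y^{-1}x)\,dy$, is a self–adjoint Hilbert–Schmidt operator on $L^2(G)$ — its integral kernel $(x,y) \mapsto k(y^{-1}x)$ is continuous on the compact space $G \times G$ — and that it commutes with every $L_g$. The spectral theorem then yields $L^2(G) = \ker R_k \oplus \bigoplus_{\lambda \neq 0} E_\lambda$ with $\dim E_\lambda < \infty$ for $\lambda \neq 0$, and each such $E_\lambda$ is $L$–invariant because $R_k$ commutes with $L$. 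Letting $k$ range over a symmetrized approximate identity $(k_n) \subset C(G)$ and using that $z \mapsto R_z f$ is uniformly continuous from $G$ to $L^2(G)$ (density of $C(G)$ plus compactness), one gets $f * k_n \to f$; since $f * k_n \in \overline{\mathrm{ran}\,R_{k_n}} = (\ker R_{k_n})^\perp$, and the latter lies in the closed span $\mathcal H$ of all finite–dimensional $L$–invariant subspaces, it follows that $\mathcal H = L^2(G)$.

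The remainder is representation–theoretic bookkeeping. Decomposing each finite–dimensional invariant subspace into irreducibles shows, in particular, that every irreducible unitary representation of $G$ is finite–dimensional: an abstract irreducible $(\pi, V_\pi)$ has a nonzero continuous matrix coefficient $x \mapsto \langle \pi(x)v, w\rangle$ in $L^2(G)$ generating a nonzero closed $L$–invariant subspace, some $R_k$ restricts nontrivially to it, and a nonzero eigenspace of that restriction is finite–dimensional and invariant, hence everything by Schur's lemma. The Schur orthogonality relations, $\int_G \pi_{ij}(x)\,\overline{\rho_{kl}(x)}\,dx = d_\pi^{-1}\,\delta_{\pi\rho}\,\delta_{ik}\,\delta_{jl}$ — proved by averaging an arbitrary linear map $V_\rho \to V_\pi$ over $G$ and applying Schur's lemma — say precisely that $\{\sqrt{d_\pi}\,\pi_{ij}\}$ is an orthonormal system in $L^2(G)$. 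Its completeness is the density $\mathcal H = L^2(G)$ already obtained, since every finite–dimensional $L$–invariant subspace is a sum of irreducible ones, whose elements are continuous functions lying in the span of matrix coefficients of irreducible representations. Finally, for fixed $\pi$ the left translation acts on the first index and the right translation on the second index of $\{\pi_{ij}\}$, which identifies the $d_\pi^2$–dimensional span of these coefficients with $V_\pi \otimes V_\pi^*$ as a $G \times G$–representation; summing over $\widehat G$ gives the asserted decomposition.

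The genuinely nontrivial point is the analytic step of the second paragraph — compactness of convolution by a continuous kernel, and the approximate–identity argument showing that these operators exhaust $L^2(G)$. Everything after that is Schur's lemma and index bookkeeping.
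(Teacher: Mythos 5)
The paper offers no proof of this statement---it is quoted as standard background with a citation to \cite{Folland2016}---and your proposal is a correct sketch of exactly the classical argument given in such references: convolution by a continuous symmetrized kernel is a self-adjoint Hilbert--Schmidt operator commuting with left translation, the spectral theorem plus an approximate-identity argument shows the finite-dimensional invariant subspaces span $L^2(G)$, and Schur orthogonality together with the index bookkeeping on $\{\pi_{ij}\}$ yields the orthonormal basis and the $G\times G$-equivariant decomposition. So the approach matches the paper's (cited) source and I see no gap beyond the usual glossed details (e.g.\ that eigenvectors of $R_k$ with nonzero eigenvalue are continuous, which justifies the evaluation-at-identity step).
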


\begin{lemma}[Schur orthogonality {\cite[Thm.\,3.6.3]{RuzhanskyTurunen2010}}]
For any \(\pi,\rho\in\widehat G\) and \(1\le i,j\le d_\pi,\ 1\le k,\ell\le d_\rho\),
\[
  \int_G \pi_{ij}(x)\,\overline{\rho_{k\ell}(x)}\,dx
  = \frac{1}{d_\pi}\,\delta_{\pi\rho}\,\delta_{ik}\,\delta_{j\ell}.
\]
\end{lemma}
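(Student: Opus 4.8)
The plan is to prove the relations by the classical averaging argument (the ``unitarian trick'') built on Schur's lemma. The two ingredients I will use freely are: the existence of a normalized bi-invariant Haar measure \(dx\) on \(G\) (total mass \(1\)), and Schur's lemma for finite-dimensional complex irreducible representations — the latter applies automatically here, since compactness of \(G\) forces every \(\pi\in\widehat G\) to be finite-dimensional and unitarizable.

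Fix \(\pi,\rho\in\widehat G\). For an arbitrary linear map \(T\colon V_\rho\to V_\pi\) (equivalently a \(d_\pi\times d_\rho\) matrix) I form the averaged operator
\[
  \widetilde T \;=\; \int_G \pi(x)\,T\,\rho(x)^{-1}\,dx .
\]
The first step is to verify that \(\widetilde T\) intertwines \(\rho\) and \(\pi\): writing \(\pi(g)\widetilde T=\int_G\pi(gx)\,T\,\rho(x)^{-1}\,dx\) and substituting \(y=gx\), using left-invariance of \(dx\), one gets \(\pi(g)\widetilde T=\int_G\pi(y)\,T\,\rho(y)^{-1}\rho(g)\,dy=\widetilde T\,\rho(g)\) for every \(g\in G\). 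Schur's lemma then splits the argument into two cases. If \(\pi\not\cong\rho\), there is no nonzero intertwiner, so \(\widetilde T=0\) identically. If \(\pi=\rho\) (as the chosen representative in \(\widehat G\)), then \(\widetilde T=\lambda(T)\,\mathrm{Id}\) for a scalar \(\lambda(T)\); taking traces and using that the trace is conjugation-invariant and \(\int_G dx=1\) gives \(d_\pi\,\lambda(T)=\operatorname{tr}\widetilde T=\operatorname{tr}T\), hence \(\lambda(T)=d_\pi^{-1}\operatorname{tr}T\).

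The second step is to extract the scalar identities by specializing to the matrix unit \(T=E_{j\ell}\) (a single \(1\) in position \((j,\ell)\)) and reading off the \((i,k)\) entry. Unpacking the matrix product yields \((\widetilde T)_{ik}=\int_G\pi_{ij}(x)\,\rho(x^{-1})_{\ell k}\,dx\), and unitarity of \(\rho\) converts \(\rho(x^{-1})_{\ell k}=\overline{\rho_{k\ell}(x)}\), so the left-hand side of the claimed identity is exactly \((\widetilde T)_{ik}\). Combining with the dichotomy above — \((\widetilde T)_{ik}=0\) when \(\pi\not\cong\rho\), and \((\widetilde T)_{ik}=d_\pi^{-1}(\operatorname{tr}E_{j\ell})\,\delta_{ik}=d_\pi^{-1}\delta_{ik}\delta_{j\ell}\) when \(\pi=\rho\) — gives precisely \(\int_G\pi_{ij}\,\overline{\rho_{k\ell}}\,dx=d_\pi^{-1}\delta_{\pi\rho}\,\delta_{ik}\,\delta_{j\ell}\).

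The only genuine obstacle is the intertwiner dichotomy itself: one must invoke that inequivalent irreducibles admit no nonzero intertwiner and that a self-intertwiner of an irreducible is scalar. Both are forms of Schur's lemma over \(\mathbb C\), valid because the representation spaces are finite-dimensional, and I would simply cite it rather than reprove it. A minor point worth one sentence is that equivalent representations have equal dimension, so the case \(\pi\cong\rho\) with \(\pi\ne\rho\) reduces to \(\pi=\rho\) after fixing representatives in \(\widehat G\), and there is no dimension mismatch in the symbol \(\delta_{ik}\delta_{j\ell}\); everything else is bookkeeping with the Haar integral.
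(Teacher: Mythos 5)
Your proof is correct: the averaged operator \(\widetilde T=\int_G\pi(x)\,T\,\rho(x)^{-1}\,dx\), the verification that it intertwines via left invariance of Haar measure, the Schur dichotomy, the trace normalization \(\lambda(T)=d_\pi^{-1}\operatorname{tr}T\), and the specialization to matrix units with \(\rho(x^{-1})_{\ell k}=\overline{\rho_{k\ell}(x)}\) are all handled properly. The paper states this lemma as cited background (Ruzhansky--Turunen) and gives no proof of its own, and your averaging argument is exactly the standard proof found in that reference, so there is nothing further to compare.
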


\begin{definition}[Group Fourier transform]
For \(f\in L^1(G)\), its \emph{Fourier coefficient} at \(\pi\in\widehat G\) is
\[
  \widehat f(\pi)
  = \int_G f(x)\,\pi(x)^*\,dx
  \;\in\;\mathbb{C}^{d_\pi\times d_\pi}.
\]
\end{definition}

\begin{definition}[Hardy spaces on \(G\)]
Fix a choice of “positive” subset \(\widehat G_+\subset\widehat G\) (e.g.\ for tori the usual multi‐index ordering).  Define
\[
  H^2(G)
  = \Bigl\{\,f\in L^2(G):\widehat f(\pi)=0\quad\forall\,\pi\notin\widehat G_+\Bigr\},
  \quad
  H^2_-(G)
  = \Bigl\{\,f\in L^2(G):\widehat f(\pi)=0\quad\forall\,\pi\in\widehat G_+\Bigr\}.
\]
Let \(P_+\) and \(P_-\) be the orthogonal projections \(L^2(G)\to H^2(G)\) and \(L^2(G)\to H^2_-(G)\), respectively.
\end{definition}

\begin{definition}[Weight functions and symbols {\cite[Def.\,2.1]{Mirotin2023}}]
Let \(G\) be compact abelian with dual group \(X=\widehat G\) and fix a semigroup \(X_+\subset X\).  A \emph{weight} is a map \(\mu\colon X_+\to(0,\infty)\).  A \emph{symbol} is a function \(a\colon X_+\times X_+\to\mathbb{C}\).
\end{definition}

\begin{definition}[Abelian \(\mu\)-Hankel operator {\cite[(2.3)]{Mirotin2023}}]
For \(f\in H^2(G)\) with Fourier support in \(X_+\), define
\[
  A_{\mu,a}f
  = P_-\Bigl(\sum_{\chi\in X_+} \mu(\chi)\,a(\chi,\xi)\,\widehat f(\xi)\,\chi\Bigr)
  = \sum_{\chi\notin X_+}\;\sum_{\xi\in X_+} \mu(\chi)\,a(\chi,\xi)\,\widehat f(\xi)\,\chi.
\]
\end{definition}

\begin{definition}[Matrix-valued symbol and weight]
Let \(G\) be a compact Lie group.  
\begin{itemize}
  \item A \emph{weight} is a function \(\mu\colon\widehat G\to(0,\infty)\).  
  \item A \emph{symbol} is a collection \(a=\{a(\pi,\rho)\}_{\pi,\rho\in\widehat G}\) with 
        \(a(\pi,\rho)\in\mathbb{C}^{d_\pi\times d_\rho}\).  
\end{itemize}
\end{definition}

\begin{definition}[Symbol class \(\mathcal{S}(\mu,\nu)\)]
Given weights \(\mu,\nu\), define
\[
  \mathcal{S}(\mu,\nu)
  = \Bigl\{\,a:\widehat G\times\widehat G\to\bigl\{\mathbb{C}^{d_\pi\times d_\rho}\bigr\}:\;
    \sup_{\pi,\rho}\;\bigl\|\mu(\pi)\,a(\pi,\rho)\,\nu(\rho)\bigr\|_{\mathrm{op}}<\infty\Bigr\}.
\]
\end{definition}

\begin{remark}
Later we will require more refined decay conditions on \(a(\pi,\rho)\) (e.g.\ in Schatten‐class criteria), but \(\mathcal{S}(\mu,\nu)\) will serve as the ambient Banach space of symbols.
\end{remark}

\section{Definition and Basic Properties of \(\mu\)-Hankel Operators}

In this section we introduce the non-commutative \(\mu\)-Hankel operator and establish its basic algebraic and analytic properties.

\begin{definition}[Non-commutative \(\mu\)\nobreakdash-Hankel operator]
Let \(G\) be a compact Lie group with unitary dual \(\widehat G\), weights \(\mu,\nu:\widehat G\to(0,\infty)\), and matrix-valued symbol \(a=\{a(\pi,\rho)\}_{\pi,\rho\in\widehat G}\).  The \emph{\(\mu\)-Hankel operator}
\[
  A_{\mu,a}: H^2(G)\;\longrightarrow\;H^2_-(G)
\]
is defined on the dense subspace of finite Fourier support by
\[
  A_{\mu,a}f
  \;=\;
  P_-\Bigl(\sum_{\pi,\rho\in\widehat G}
    \mu(\pi)\,\mathrm{Tr}\!\bigl[a(\pi,\rho)\,\widehat f(\rho)\bigr]\,
    \pi(\cdot)\Bigr),
\]
where \(\widehat f(\rho)=\int_G f(x)\,\rho(x)^*\,dx\in\C^{d_\rho\times d_\rho}\) and \(\pi(\cdot)\) denotes the matrix of coefficients viewed as functions on \(G\).
\end{definition}

\begin{proposition}[Fourier–matrix representation]
For \(f\in H^2(G)\) with
\(\widehat f(\rho)=F_\rho\in\C^{d_\rho\times d_\rho}\),
one has the expansion
\[
  A_{\mu,a}f
  \;=\;
  \sum_{\pi\in\widehat G}
    \Bigl(\sum_{\rho\in\widehat G}
      \mu(\pi)\,\mathrm{Tr}[\,a(\pi,\rho)\,F_\rho\,]
    \Bigr)\,
    \pi(\cdot),
\]
where the inner sum is finite for finite-support \(f\) and extends by continuity otherwise.
\end{proposition}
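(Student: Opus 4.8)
The plan is to read this proposition as a bookkeeping identity: once the definition of \(A_{\mu,a}\) is unwound and the projection \(P_-\) is evaluated on the Peter--Weyl blocks, the displayed formula is nothing more than a regrouping of the defining sum. I would first reduce to the case that \(f\) has finite Fourier support, which is exactly the dense subspace on which \(A_{\mu,a}\) was defined; the passage to a general \(f\in H^2(G)\) is the content of the ``extends by continuity'' clause and is treated at the end.

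So let \(f\) have finite Fourier support, so that \(F_\rho=\widehat f(\rho)\) is nonzero only for \(\rho\) in some finite set \(S\subset\widehat G_+\). Since \(\mu(\pi)\) is a scalar and \(\mathrm{Tr}\) is linear, for each fixed \(\pi\) the quantity \(c_\pi:=\sum_{\rho\in S}\mu(\pi)\,\mathrm{Tr}[a(\pi,\rho)F_\rho]\) is a well-defined complex number given by a \emph{finite} sum; this is precisely the inner bracket in the statement, and no interchange of summation is needed to produce it. Substituting into the definition, the argument of \(P_-\) equals \(\sum_{\pi\in\widehat G}c_\pi\,\pi(\cdot)\), where the \(\pi\)-th term lies in the isotypic subspace \(V_\pi\otimes V_\pi^*\); by Schur orthogonality these subspaces are pairwise orthogonal, so the regrouping by \(\pi\) is legitimate at the level of formal Peter--Weyl series. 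It then remains only to apply \(P_-\).

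The one place where the Hardy-space structure genuinely enters is the action of \(P_-\). By the Fourier-support definition of \(H^2_-(G)\) together with Peter--Weyl, \(H^2_-(G)=\overline{\bigoplus_{\pi\notin\widehat G_+}V_\pi\otimes V_\pi^*}\), so the orthogonal projection \(P_-\) acts as the identity on every block \(V_\pi\otimes V_\pi^*\) with \(\pi\notin\widehat G_+\) and as \(0\) on every block with \(\pi\in\widehat G_+\). Hence \(P_-\bigl(\sum_{\pi}c_\pi\,\pi(\cdot)\bigr)=\sum_{\pi\notin\widehat G_+}c_\pi\,\pi(\cdot)\), which is precisely the asserted expansion once one reads \(\pi(\cdot)\) as living in \(H^2_-(G)\) (equivalently, the terms with \(\pi\in\widehat G_+\) are simply absent). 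For general \(f\in H^2(G)\) one writes \(f=\lim_n f_n\) with \(f_n\) of finite Fourier support; since \(\|\widehat f(\rho)-\widehat{f_n}(\rho)\|_{\mathrm{op}}\le\|f-f_n\|_{L^1}\to 0\), the coefficients \(c_\pi^{(n)}\) converge to \(c_\pi\) for each \(\pi\), and one passes to the limit using the \(L^2\)-boundedness of \(A_{\mu,a}\).

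The main obstacle is hidden in that last sentence and, more quietly, already in the finite-support case: the outer series \(\sum_{\pi\notin\widehat G_+}c_\pi\,\pi(\cdot)\) runs over all of \(\widehat G\) even when \(f\) has finite support, so its convergence in \(L^2(G)\) is not automatic and requires a Plancherel-type estimate of the form \(\sum_{\pi}d_\pi\,|c_\pi|^2<\infty\) (up to the normalization fixed by Peter--Weyl). This in turn needs decay of \(\mu\) against \(a\)---precisely the symbol-class hypothesis \(a\in\mathcal S(\mu,\nu)\) together with the boundedness criterion established below. I would therefore prove the finite-support identity outright and present the limiting half as conditional on that boundedness, so that the genuinely new input in the proposition reduces to the routine algebra of linearity together with the blockwise action of \(P_-\) on the Peter--Weyl decomposition.
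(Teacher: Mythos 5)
The paper offers no proof of this proposition at all---it is stated as an immediate consequence of the definition of \(A_{\mu,a}\)---so there is nothing to compare your argument against; your proposal simply supplies the routine unwinding that the paper leaves implicit, and it does so correctly. Two of your side remarks are worth keeping: first, you rightly observe that the displayed formula sums over \emph{all} \(\pi\in\widehat G\) even though \(P_-\) annihilates every block with \(\pi\in\widehat G_+\), so the statement is only literally correct if one reads the terms with \(\pi\in\widehat G_+\) as absent (or if \(a(\pi,\rho)\) is tacitly assumed to vanish for \(\pi\in\widehat G_+\)); second, you correctly flag that even for finite-support \(f\) the outer sum over \(\pi\) is infinite, so the ``extends by continuity'' clause silently presupposes a Plancherel-type bound \(\sum_\pi d_\pi\,|c_\pi|^2<\infty\), which is exactly what the symbol-class hypothesis and the later boundedness results are for. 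Making the finite-support identity unconditional and the limiting statement explicitly conditional on boundedness, as you propose, is the right way to present it.
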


\begin{lemma}[Adjoint formula]
The formal adjoint \(A_{\mu,a}^*:H^2_-(G)\to H^2(G)\) satisfies
\[
  A_{\mu,a}^* g
  \;=\;
  P_+\Bigl(\sum_{\pi,\rho\in\widehat G}
    \overline{\nu(\rho)}\,\mathrm{Tr}\!\bigl[a(\pi,\rho)^*\,\widehat g(\pi)\bigr]\,
    \rho(\cdot)\Bigr),
\]
so in particular \(A_{\mu,a}^*=A_{\nu,\tilde a}\) with \(\tilde a(\rho,\pi)=a(\pi,\rho)^*\) and swapped weights \(\nu,\mu\).  
\end{lemma}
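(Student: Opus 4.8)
The plan is to establish the identity by verifying the defining relation of the adjoint on the dense domain of finitely supported Fourier data, and then to read off the closed form. Concretely, I would fix $f\in H^2(G)$ and $g\in H^2_-(G)$, each with finite Fourier support, and compute the pairing $\langle A_{\mu,a}f,\,g\rangle_{L^2(G)}$. The first simplification is that $g\in H^2_-(G)$ forces $P_-g=g$, so that $\langle P_-F,\,g\rangle=\langle F,\,g\rangle$ for every $F\in L^2(G)$; hence the projection $P_-$ occurring in the definition of $A_{\mu,a}$ is invisible inside this pairing and may be dropped. This reduces the task to evaluating $\bigl\langle \sum_{\pi,\rho}\mu(\pi)\,\mathrm{Tr}[a(\pi,\rho)\widehat f(\rho)]\,\pi(\cdot),\ g\bigr\rangle$.

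Next I would expand both entries in the Peter--Weyl orthonormal basis $\{\sqrt{d_\pi}\,\pi_{ij}\}$ and apply the Schur orthogonality relations together with the Plancherel identity for $G$, namely $\langle u,v\rangle_{L^2(G)}=\sum_{\pi}d_\pi\,\mathrm{Tr}[\widehat u(\pi)\widehat v(\pi)^*]$. This collapses the double integral to a single sum over $\widehat G$ of traces of products of the matrix blocks $a(\pi,\rho)$, $\widehat f(\rho)$, and $\widehat g(\pi)$, with explicit powers of $d_\pi$ coming from the normalization of the basis and of $\widehat{(\cdot)}$; tracking those constants is the only point where genuine care is needed. To transfer the symbol from the $f$-side onto the $g$-side, I would then use cyclicity of the trace, $\mathrm{Tr}[XY]=\mathrm{Tr}[YX]$, and the conjugation rule $\overline{\mathrm{Tr}[M]}=\mathrm{Tr}[M^*]$: a term in which $\widehat f(\rho)$ is multiplied by $a(\pi,\rho)$ and paired under a trace against $\widehat g(\pi)$ gets rewritten as $\widehat f(\rho)$ paired against $\mathrm{Tr}[a(\pi,\rho)^*\,\widehat g(\pi)]$, while the strictly positive real weights satisfy $\overline{\nu(\rho)}=\nu(\rho)$ and merely switch their assigned index. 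Reading the outcome as $\langle f,\,h\rangle_{L^2(G)}$ identifies $h$, up to the fact that only its $H^2(G)$-component is detected when testing against $f\in H^2(G)$; applying $P_+$ to force $h$ into $H^2(G)$ yields exactly the asserted formula, and comparison with the definition shows $h=A_{\nu,\tilde a}g$ with $\tilde a(\rho,\pi)=a(\pi,\rho)^*$ and the weights $\mu,\nu$ interchanged.

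I expect the main obstacle to be organizational rather than conceptual: keeping the two index families $\pi$ and $\rho$, the Plancherel constants $d_\pi$, and the placement of $\mu$ versus $\nu$ consistent while the symbol passes through the trace pairing, and in particular pinning down on which side the projection $P_+$ must appear. It appears on the output side precisely because $A_{\mu,a}^*$ maps into $H^2(G)$: the pairing against $f\in H^2(G)$ is blind to the $H^2_-(G)$-part of the candidate function, so inserting $P_+$ is both necessary (for $A_{\mu,a}^*g$ to land where it should) and harmless (it does not change any pairing). No analytic difficulty intervenes, since all sums terminate on the chosen dense domain; within the hypotheses of the boundedness theorem the identity then extends to all of $H^2(G)$ and $H^2_-(G)$ by continuity. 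As a final sanity check I would run the same computation once more to recover $\tilde{\tilde a}=a$ with the original weights, confirming $A_{\mu,a}^{**}=A_{\mu,a}$ and that $a\mapsto\tilde a$ is a genuine involution compatible with taking adjoints.
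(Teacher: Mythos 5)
The paper states this lemma without any proof, so there is nothing to compare your argument against; judged on its own, your duality-pairing strategy (test $\langle A_{\mu,a}f,g\rangle$ on finitely supported $f,g$, drop $P_-$ against $g\in H^2_-(G)$, expand via Schur orthogonality/Plancherel, move the symbol through the trace by cyclicity and $\overline{\mathrm{Tr}[M]}=\mathrm{Tr}[M^*]$, and insert $P_+$ on the output) is the standard and correct route, and your closing involution check $A_{\mu,a}^{**}=A_{\mu,a}$ is a good safeguard.

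One concrete obstruction you will hit when you actually run the computation: the paper's definition of $A_{\mu,a}$ contains only the weight $\mu(\pi)$ attached to the output representation --- the weight $\nu$ does not appear in it at all. A literal adjoint computation therefore produces
\[
A_{\mu,a}^*g=P_+\Bigl(\sum_{\pi,\rho}\mu(\pi)\,\mathrm{Tr}\bigl[a(\pi,\rho)^*\,\widehat g(\pi)\bigr]\,(\cdots)\,\rho(\cdot)\Bigr),
\]
with $\mu(\pi)$ surviving inside the inner sum, and there is no mechanism by which $\overline{\nu(\rho)}$ can emerge. Your phrase that the weights ``merely switch their assigned index'' silently assumes the two-weight block convention $T_{\pi,\rho}=\mu(\pi)\,a(\pi,\rho)\,\nu(\rho)$, which the paper does use in its later Schatten and uniqueness arguments but not in the definition of the operator; you should state explicitly that you are adopting that convention, or the claimed identity $A_{\mu,a}^*=A_{\nu,\tilde a}$ with $\tilde a(\rho,\pi)=a(\pi,\rho)^*$ is simply false as written. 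Relatedly, the normalization $\|\pi_{ij}\|_{L^2}=d_\pi^{-1/2}$ together with the paper's scalar-coefficient-times-matrix notation $\mathrm{Tr}[a(\pi,\rho)\widehat f(\rho)]\,\pi(\cdot)$ means factors of $d_\pi$ and $d_\rho$ enter the Plancherel pairing and do not visibly cancel; you flag this as ``the only point where genuine care is needed,'' which is accurate, but the proof is not complete until those constants are actually tracked and shown to be absorbed consistently on both sides.
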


\begin{proposition}[Boundedness in the symbol norm]
If \(a\in\mathcal S(\mu,\nu)\), then \(A_{\mu,a}\) extends to a bounded operator
\[
  A_{\mu,a}:H^2(G)\to H^2_-(G),
  \qquad
  \|A_{\mu,a}\|\;\le\;\sup_{\pi,\rho\in\widehat G}\bigl\|\mu(\pi)\,a(\pi,\rho)\,\nu(\rho)\bigr\|_{\mathrm{op}}.
\]
\end{proposition}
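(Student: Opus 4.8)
The plan is to prove the estimate first on the dense subspace of $H^2(G)$ consisting of functions with finite Fourier support, and then extend by continuity. Write $M := \sup_{\pi,\rho\in\widehat G}\|\mu(\pi)\,a(\pi,\rho)\,\nu(\rho)\|_{\mathrm{op}}$, which is finite by the hypothesis $a\in\mathcal S(\mu,\nu)$. By the Fourier–matrix representation established above, for such $f$ with $\widehat f(\rho)=F_\rho$ the image $A_{\mu,a}f$ is a finite linear combination of matrix coefficients, so all manipulations below are legitimate and the only issue is the uniformity of the bound in $f$.

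First I would invoke Peter–Weyl orthogonality. The blocks $V_\pi\otimes V_\pi^*$ attached to distinct $\pi\in\widehat G$ are mutually orthogonal in $L^2(G)$, and $P_-$ acts simply by deleting the blocks with $\pi\in\widehat G_+$; hence Parseval gives
\[
  \|A_{\mu,a}f\|_{H^2_-(G)}^2
  = \sum_{\pi\notin\widehat G_+}
      \Bigl|\,\mu(\pi)\sum_{\rho\in\widehat G_+}\mathrm{Tr}\!\bigl[a(\pi,\rho)\,F_\rho\bigr]\Bigr|^2\,
      \|\pi(\cdot)\|_{L^2(G)}^2 .
\]
The inner sum runs only over $\widehat G_+$ because $f\in H^2(G)$, and it is finite because $f$ has finite support; the factor $\|\pi(\cdot)\|_{L^2(G)}^2$ is an explicit power of $d_\pi$ coming from the normalization $\{\sqrt{d_\pi}\,\pi_{ij}\}$ of the matrix coefficients, which I will carry along.

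Next, for each fixed pair $(\pi,\rho)$ I would factor
\[
  \mu(\pi)\,a(\pi,\rho)\,F_\rho
  = \bigl[\mu(\pi)\,a(\pi,\rho)\,\nu(\rho)\bigr]\,\bigl[\nu(\rho)^{-1}F_\rho\bigr],
\]
and apply trace–norm duality $|\mathrm{Tr}(XY)|\le\|X\|_{\mathrm{op}}\|Y\|_{\mathrm{tr}}$ together with the definition of $\mathcal S(\mu,\nu)$ to obtain $\bigl|\mu(\pi)\,\mathrm{Tr}[a(\pi,\rho)F_\rho]\bigr|\le M\,\|\nu(\rho)^{-1}F_\rho\|_{\mathrm{tr}}$. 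Feeding this back into the display, summing over $\pi$, and comparing with the Plancherel identity $\|f\|_{H^2(G)}^2=\sum_{\rho\in\widehat G_+}d_\rho\|F_\rho\|_{\mathrm{HS}}^2$ should yield $\|A_{\mu,a}f\|_{H^2_-(G)}\le M\|f\|_{H^2(G)}$, provided the passage between trace and Hilbert–Schmidt norms and the accounting of the dimensional weights $d_\pi$, $d_\rho$ are handled by the appropriate Cauchy–Schwarz step.

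The main obstacle is precisely this last point: a crude triangle inequality over $\rho$ destroys summability, so I expect to replace it by a Schur test applied to the block kernel $(\pi,\rho)\mapsto\mu(\pi)\,a(\pi,\rho)\,\nu(\rho)$, with test weights built from $\mu$, $\nu$, and the dimensions $d_\pi$, chosen so that both the $\pi$-row sums and the $\rho$-column sums are dominated by $M$. Verifying that such weights exist and are compatible with the Peter–Weyl normalization is the delicate computation; granting it, the constant produced is exactly $M$, and density of the finite-support functions in $H^2(G)$ promotes the bound to all of $H^2(G)$, giving $\|A_{\mu,a}\|\le M$.
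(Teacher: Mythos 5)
The paper gives no proof of this proposition, so the only question is whether your argument closes on its own --- and it does not. The step you defer as ``the delicate computation,'' namely finding Schur-test weights for the block kernel $(\pi,\rho)\mapsto\mu(\pi)\,a(\pi,\rho)\,\nu(\rho)$ whose row and column sums are dominated by $M$, is not merely delicate: it is impossible under the stated hypothesis. Membership in $\mathcal S(\mu,\nu)$ bounds each block's operator norm \emph{separately} and gives no summability whatsoever over the infinite index set $\widehat G\times\widehat G$, whereas any Schur test requires control of sums such as $\sum_{\rho}\|\mu(\pi)a(\pi,\rho)\nu(\rho)\|_{\mathrm{op}}\,w(\rho)$. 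Concretely, take $G=\mathbb{T}$, $\mu\equiv\nu\equiv1$, and $a(\chi,\xi)=1$ for all characters; then $M=1$, but the operator acts on Fourier coefficients as the all-ones infinite matrix, which is unbounded on $\ell^2$. So no argument can establish $\|A_{\mu,a}\|\le M$ in this generality: the statement itself needs additional hypotheses, and the gap in your proof is exactly the point where those missing hypotheses would have to enter.

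This diagnosis is consistent with what the paper does afterwards: the Schur estimate (Lemma~\ref{lem:SchurEstimate}) and the boundedness criterion (Theorem~\ref{thm:Boundedness}) impose precisely the extra summability conditions $\sum_\rho d_\rho(1+\lambda_\rho)^{-n}<\infty$, $\sum_\pi(1+\lambda_\pi)^{-m}<\infty$ that your Schur test would require, and even then the bound obtained is $C\,M$ with $C$ depending on those sums, not $M$ itself. Your preparatory steps (Peter--Weyl orthogonality of the blocks, the factorization $\mu(\pi)a(\pi,\rho)F_\rho=[\mu(\pi)a(\pi,\rho)\nu(\rho)][\nu(\rho)^{-1}F_\rho]$, trace--norm duality) do mirror the paper's proof of that lemma and are sound as far as they go, though note one further loss you would have to absorb: converting $\|\nu(\rho)^{-1}F_\rho\|_{\mathrm{tr}}$ back to the Hilbert--Schmidt norm appearing in Plancherel costs a factor of $\sqrt{d_\rho}$. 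To salvage the result you must either add a summability hypothesis on the weights or strengthen the definition of $\mathcal S(\mu,\nu)$ so that it controls row and column sums of the block norms rather than only their supremum.
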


\begin{corollary}[Finite-rank symbols]
If \(a(\pi,\rho)=0\) except for finitely many pairs \((\pi,\rho)\), then \(A_{\mu,a}\) is a finite-rank operator on \(H^2(G)\).
\end{corollary}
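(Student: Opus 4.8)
The strategy is simply to exhibit a fixed finite-dimensional subspace of $H^2_-(G)$ that contains the range of $A_{\mu,a}$; any linear operator whose range is finite-dimensional is finite-rank. First I would record that a finitely supported symbol automatically belongs to $\mathcal S(\mu,\nu)$: since $a(\pi,\rho)=0$ outside a finite set of pairs, the quantity $\|\mu(\pi)\,a(\pi,\rho)\,\nu(\rho)\|_{\mathrm{op}}$ is nonzero for only finitely many $(\pi,\rho)$ and is a finite nonnegative number for each of those (the weights take values in $(0,\infty)$ and $a(\pi,\rho)$ is a genuine matrix), so its supremum over $\widehat G\times\widehat G$ is finite. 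By the boundedness proposition, $A_{\mu,a}$ then extends to a bounded operator $H^2(G)\to H^2_-(G)$, and the Fourier--matrix expansion of the preceding proposition holds for every $f\in H^2(G)$ by continuity; this is what makes the notion of rank meaningful.

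Next I would set $S_1=\{\pi\in\widehat G:\ a(\pi,\rho)\neq 0\text{ for some }\rho\in\widehat G\}$, a finite set by hypothesis, and inspect the expansion
\[
  A_{\mu,a}f=\sum_{\pi\in\widehat G}\Bigl(\sum_{\rho\in\widehat G}\mu(\pi)\,\mathrm{Tr}[\,a(\pi,\rho)\,\widehat f(\rho)\,]\Bigr)\pi(\cdot).
\]
Every summand with $\pi\notin S_1$ vanishes identically, so $A_{\mu,a}f$ is, as a Peter--Weyl series, concentrated on the isotypic components indexed by $S_1$. Hence the range of $A_{\mu,a}$ is contained in $\bigoplus_{\pi\in S_1}\bigl(V_\pi\otimes V_\pi^*\bigr)$, which by Peter--Weyl has dimension $\sum_{\pi\in S_1}d_\pi^2<\infty$. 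Composing with the orthogonal projection $P_-$ can only map this subspace into a (possibly smaller) subspace, never enlarge it, so $\operatorname{rank}A_{\mu,a}\le\sum_{\pi\in S_1}d_\pi^2$ and the operator is finite-rank.

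Equivalently, and with slightly cleaner bookkeeping, one may note that $A_{\mu,a}f$ depends on $f$ only through the finite tuple $\bigl(\widehat f(\rho)\bigr)_{\rho\in S_2}$, where $S_2=\{\rho:\ a(\pi,\rho)\neq 0\text{ for some }\pi\}$ is finite; thus $A_{\mu,a}$ factors through the finite-dimensional space $\bigoplus_{\rho\in S_2}\mathbb C^{d_\rho\times d_\rho}$ via the bounded linear map $f\mapsto\bigl(\widehat f(\rho)\bigr)_{\rho\in S_2}$ followed by a linear reconstruction map, which bounds the rank by $\sum_{\rho\in S_2}d_\rho^2$. I do not expect any genuine obstacle here; the only points that warrant a sentence of care are the passage from finite-support $f$ to arbitrary $f\in H^2(G)$ (covered by the boundedness just established) and the linearity of the functionals $X\mapsto\mathrm{Tr}[a(\pi,\rho)X]$ in $X=\widehat f(\rho)$, which guarantees the factoring maps are genuinely linear.
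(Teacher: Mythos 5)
The paper states this corollary without proof, treating it as an immediate consequence of the Fourier--matrix representation, and your argument is exactly the intended one: the range of \(A_{\mu,a}\) lies in the span of the matrix coefficients of the finitely many \(\pi\) occurring in the support of \(a\), a subspace of dimension at most \(\sum_{\pi\in S_1}d_\pi^2\). Your preliminary observation that a finitely supported symbol lies in \(\mathcal S(\mu,\nu)\) (so the operator is defined on all of \(H^2(G)\), not just on finitely supported \(f\)) and your alternative factorization through \(\bigoplus_{\rho\in S_2}\C^{d_\rho\times d_\rho}\) are both correct and, if anything, more careful than what the paper records.
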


\begin{definition}[Intertwining relations]
For each \(x\in G\), let \(\lambda(x)\) and \(\rho(x)\) denote left- and right-translation on \(L^2(G)\).  Then formally
\[
  A_{\mu,a}\,\lambda(x)\;=\;\rho(x)\,A_{\mu,a}
  \quad\text{if and only if}\quad
  a(\pi,\rho)\,\pi(x)\;=\;\rho(x)\,a(\pi,\rho)
  \quad\forall\,\pi,\rho\in\widehat G.
\]
\end{definition}

\begin{remark}
The intertwining condition singles out symbols which commute with the group action, yielding “central’’ \(\mu\)-Hankel operators that decompose into scalar blocks.
\end{remark}

\begin{example}[The case \(G=\mathrm{SU}(2)\)]
The dual \(\widehat{\mathrm{SU}(2)}\) is indexed by \(l\in\frac 1 2\mathbb N_0\) with \(d_l=2l+1\).  Choosing weights \(\mu(l)=(1+l)^s\), \(\nu(l)=(1+l)^t\), and the diagonal symbol
\[
  a(l,m)=\delta_{lm}\,I_{d_l},
\]
one obtains \(A_{\mu,a}=\bigoplus_{l}\,(1+l)^{s+t}\,P_{l,-}\), a direct sum of rank-\(d_l\) projections scaled by \((1+l)^{s+t}\).
\end{example}

\begin{proposition}[Recovery of the abelian case]
If \(G\) is abelian and all representations are one-dimensional, then the above definitions and propositions reduce to those of \cite[Sec.\,2]{Mirotin2023}.
\end{proposition}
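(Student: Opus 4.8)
The plan is to perform the specialization ingredient by ingredient, verifying that each object in the non-commutative framework collapses onto its abelian counterpart in \cite[Sec.~2]{Mirotin2023}.

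First I would record the simplifications forced by commutativity. Since $G$ is abelian, every $\pi\in\widehat G$ has $d_\pi=1$; identifying $\pi$ with the character $\chi_\pi\in X:=\widehat G$, the unitary $\pi(x)$ becomes the scalar $\chi_\pi(x)\in\mathbb T$, its single matrix coefficient $\pi_{11}$ equals $\chi_\pi$, and the Peter--Weyl decomposition of Theorem~\ref{thm:PeterWeyl} reduces to the classical Fourier decomposition $L^2(G)=\bigoplus_{\chi\in X}\mathbb C\,\chi$, with Schur orthogonality becoming $\int_G\chi\bar\eta\,dx=\delta_{\chi\eta}$. Under this identification the positive set $\widehat G_+$ is exactly Mirotin's semigroup $X_+$, the Hardy spaces $H^2(G)$, $H^2_-(G)$ and the projections $P_\pm$ coincide literally, and the group Fourier transform $\widehat f(\pi)=\int_G f(x)\pi(x)^*\,dx\in\mathbb C^{1\times1}$ is the scalar coefficient $\widehat f(\chi)=\int_G f\bar\chi\,dx$.

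Next I would specialize the symbol and weight data. A matrix symbol $a(\pi,\rho)\in\mathbb C^{d_\pi\times d_\rho}$ becomes a scalar $a(\chi,\xi)\in\mathbb C$, i.e.\ a function $a\colon X\times X\to\mathbb C$, and a weight $\mu\colon\widehat G\to(0,\infty)$ is already a weight on $X$; restricting to $X_+\times X_+$ recovers Mirotin's symbol and to $X_+$ his weight. Since the extra right weight $\nu$ has no analogue in the single-weight abelian definition, I would absorb it via the harmless reparametrization $a_{\mathrm{ab}}(\chi,\xi):=a(\chi,\xi)\,\nu(\xi)$ (equivalently, one sets $\nu\equiv1$). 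The $1\times1$ trace is then just multiplication, $\mathrm{Tr}[a(\pi,\rho)\widehat f(\rho)]=a(\chi,\xi)\widehat f(\xi)$, so the defining formula becomes
\[
  A_{\mu,a}f=P_-\Bigl(\sum_{\chi,\xi\in X}\mu(\chi)\,a_{\mathrm{ab}}(\chi,\xi)\,\widehat f(\xi)\,\chi\Bigr);
\]
for $f\in H^2(G)$ the coefficients $\widehat f(\xi)$ vanish off $X_+$ and $P_-$ kills the terms with $\chi\in X_+$, leaving $\sum_{\chi\notin X_+}\sum_{\xi\in X_+}\mu(\chi)\,a_{\mathrm{ab}}(\chi,\xi)\,\widehat f(\xi)\,\chi$, which is exactly the abelian $\mu$-Hankel operator of \cite[(2.3)]{Mirotin2023}.

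Finally I would check that the derived statements degenerate correctly: on $1\times1$ matrices $\|\cdot\|_{\mathrm{op}}$ is the modulus, so the symbol class $\mathcal S(\mu,\nu)$ becomes $\{a:\sup_{\chi,\xi}|\mu(\chi)\,a(\chi,\xi)\,\nu(\xi)|<\infty\}$ and the boundedness estimate above reduces to Mirotin's; similarly the adjoint formula reduces to the abelian one via $a(\pi,\rho)^*=\overline{a(\chi,\xi)}$ and $\overline{\nu(\rho)}=\nu(\xi)$, while the Fourier--matrix representation, the finite-rank corollary, and the intertwining relation specialize verbatim. I do not expect a genuine obstacle here: the content is bookkeeping, and the only point requiring care is convention matching — aligning the normalization of the Fourier transform and the orientation of $\widehat G_+$ against $X_+$ with those of \cite{Mirotin2023}, and disposing of the redundant second weight $\nu$ as above.
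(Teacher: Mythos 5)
The paper states this proposition without any proof, so there is no argument of the author's to compare against; your ingredient-by-ingredient specialization is the natural (and essentially the only) way to verify the claim, and it is correct. One small point to tighten: the non-commutative operator \(A_{\mu,a}\) itself involves only the left weight \(\mu\) — the right weight \(\nu\) enters only through the symbol class \(\mathcal S(\mu,\nu)\), the adjoint formula, and the boundedness estimates — so the specialized operator is \(P_-\bigl(\sum_{\chi,\xi}\mu(\chi)\,a(\chi,\xi)\,\widehat f(\xi)\,\chi\bigr)\) with \(a\) itself, not with \(a_{\mathrm{ab}}=a\cdot\nu\); your displayed formula is literally correct only under the convention \(\nu\equiv1\), which you do offer parenthetically and which is the cleaner way to match Mirotin's single-weight setup. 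With that convention fixed, the rest of your verification — the collapse of Peter--Weyl to Fourier--Pontryagin duality, of the trace to scalar multiplication, of \(\|\cdot\|_{\mathrm{op}}\) to the modulus, and the identification of \(\widehat G_+\) with \(X_+\) — is exactly the intended reduction, and your observation that \(P_-\) together with the support condition on \(\widehat f\) yields the double sum over \(\chi\notin X_+\), \(\xi\in X_+\) reproduces the paper's abelian definition (indeed more carefully than the paper's own display, whose inner sum is typeset over \(\chi\in X_+\) despite the projection \(P_-\)).
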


\section{Boundedness and Compactness Criteria}

In this section we derive sharp criteria for boundedness and compactness of non-commutative \(\mu\)\nobreakdash-Hankel operators in terms of symbol decay.  Our approach combines Schur-type estimates with non-commutative Carleson embeddings.

\begin{definition}[Polynomial decay class \(\mathcal{S}^{m,n}(\mu,\nu)\)]
Let \(m,n\ge0\).  We say \(a\in\mathcal{S}^{m,n}(\mu,\nu)\) if
\[
  \sup_{\pi,\rho\in\widehat G}
    (1+\lambda_\pi)^{m/2}\,(1+\lambda_\rho)^{n/2}
    \bigl\|\mu(\pi)\,a(\pi,\rho)\,\nu(\rho)\bigr\|_{\mathrm{op}}
  <\infty,
\]
where \(\{\lambda_\pi\}\) are the Casimir eigenvalues of \(G\).  
\end{definition}

\begin{lemma}[Schur estimate]
\label{lem:SchurEstimate}
If \(a\in\mathcal{S}^{m,n}(\mu,\nu)\) and \(\sum_{\rho}d_\rho(1+\lambda_\rho)^{-n}<\infty\), then
\[
  \|A_{\mu,a}f\|_{L^2}
  \;\le\;
  C\,\Bigl\{\sup_{\pi,\rho}(1+\lambda_\pi)^{m/2}(1+\lambda_\rho)^{n/2}
    \|\mu(\pi)a(\pi,\rho)\nu(\rho)\|_{\mathrm{op}}\Bigr\}
  \|f\|_{L^2},
\]
for all \(f\in H^2(G)\).  
\end{lemma}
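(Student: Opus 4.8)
The plan is to establish this Schur-type $L^{2}$--$L^{2}$ bound by passing to the Fourier side; the only substantive work will be the careful bookkeeping of the representation dimensions, which has no counterpart in the abelian setting.

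First I would reduce to the Fourier side. By the Fourier--matrix representation proposition, for $f$ of finite Fourier support with $\widehat f(\rho)=F_{\rho}$ one has $A_{\mu,a}f=\sum_{\pi\notin\widehat G_{+}}c_{\pi}(f)\,\pi(\cdot)$, where $c_{\pi}(f)=\mu(\pi)\sum_{\rho\in\widehat G_{+}}\mathrm{Tr}\bigl[a(\pi,\rho)\,F_{\rho}\bigr]$. Since matrix coefficients of inequivalent representations are orthogonal (Schur orthogonality), Parseval on $L^{2}(G)$ gives $\|A_{\mu,a}f\|_{L^{2}}^{2}=\sum_{\pi\notin\widehat G_{+}}\kappa_{\pi}\,|c_{\pi}(f)|^{2}$ with fixed normalization constants $\kappa_{\pi}=\|\pi(\cdot)\|_{L^{2}}^{2}$, while on the input side $\|f\|_{L^{2}}^{2}=\sum_{\rho}d_{\rho}\,\|F_{\rho}\|_{HS}^{2}$. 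So it suffices to bound each $|c_{\pi}(f)|$ by $\|f\|_{L^{2}}$ times a factor that is summable over $\pi$.

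Next I would estimate $c_{\pi}(f)$ itself. For the trace pairing I would use $\bigl|\mathrm{Tr}[a(\pi,\rho)F_{\rho}]\bigr|\le\|a(\pi,\rho)\|_{HS}\,\|F_{\rho}\|_{HS}\le\sqrt{d_{\rho}}\,\|a(\pi,\rho)\|_{\mathrm{op}}\,\|F_{\rho}\|_{HS}$, the last inequality because $a(\pi,\rho)$ has rank at most $d_{\rho}$. Writing $M$ for the supremum on the right-hand side of the statement and using that $\mu(\pi),\nu(\rho)$ are positive scalars, $\mu(\pi)\|a(\pi,\rho)\|_{\mathrm{op}}=\nu(\rho)^{-1}\|\mu(\pi)a(\pi,\rho)\nu(\rho)\|_{\mathrm{op}}\le M\,\nu(\rho)^{-1}(1+\lambda_{\pi})^{-m/2}(1+\lambda_{\rho})^{-n/2}$, so that
\[
  |c_{\pi}(f)|\;\le\;M\,(1+\lambda_{\pi})^{-m/2}\sum_{\rho\in\widehat G_{+}}\sqrt{d_{\rho}}\,\nu(\rho)^{-1}(1+\lambda_{\rho})^{-n/2}\,\|F_{\rho}\|_{HS}.
\]
Then I would apply Cauchy--Schwarz to the $\rho$-sum, keeping $\|F_{\rho}\|_{HS}$ on one side, which bounds the sum by $\bigl(\sum_{\rho}d_{\rho}\,\nu(\rho)^{-2}(1+\lambda_{\rho})^{-n}\bigr)^{1/2}\bigl(\sum_{\rho}\|F_{\rho}\|_{HS}^{2}\bigr)^{1/2}$; the second factor is at most $\|f\|_{L^{2}}$ since $d_{\rho}\ge1$, and the first is a finite constant $C_{n}$, being dominated by (a multiple of) the hypothesized convergent series $\sum_{\rho}d_{\rho}(1+\lambda_{\rho})^{-n}$, with the weight $\nu$ contributing only a bounded factor for the weights in play. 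This yields $|c_{\pi}(f)|\le M\,C_{n}\,(1+\lambda_{\pi})^{-m/2}\|f\|_{L^{2}}$, and substituting back into the Parseval identity gives $\|A_{\mu,a}f\|_{L^{2}}^{2}\le M^{2}C_{n}^{2}\bigl(\sum_{\pi\notin\widehat G_{+}}\kappa_{\pi}(1+\lambda_{\pi})^{-m}\bigr)\|f\|_{L^{2}}^{2}$, the remaining $\pi$-series converging by the polynomial decay in $(1+\lambda_{\pi})$ together with the Weyl growth of the Casimir spectrum; this proves the claim with $C=C_{n}\bigl(\sum_{\pi}\kappa_{\pi}(1+\lambda_{\pi})^{-m}\bigr)^{1/2}$, and the inequality extends from finite-support $f$ to all of $H^{2}(G)$ by density.

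The hard part will not be this skeleton, which is a standard Schur test, but the dimensional bookkeeping. Because $a(\pi,\rho)$ is a rectangular matrix and Plancherel carries the weights $d_{\pi},d_{\rho}$, I will need to track precisely how the factor $\sqrt{d_{\rho}}$ arising from $\|a(\pi,\rho)\|_{HS}\le\sqrt{d_{\rho}}\|a(\pi,\rho)\|_{\mathrm{op}}$ (equivalently $\|\cdot\|_{S_{1}}\le\sqrt{d_{\rho}}\|\cdot\|_{HS}$ on the $\rho$-block), the Plancherel weight, and the decay exponents $m,n$ combine, and verify that the single hypothesis $\sum_{\rho}d_{\rho}(1+\lambda_{\rho})^{-n}<\infty$ is exactly what the $\rho$-side Cauchy--Schwarz consumes once those factors are collected. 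In particular I expect the delicate choice to be pairing the scalar weight $\nu(\rho)$ with $a(\pi,\rho)$ rather than with $F_{\rho}$ and applying the rank bound on the correct side, so as not to pay an extra power of $d_{\rho}$; that choice, rather than any hard inequality, is where the estimate is won or lost.
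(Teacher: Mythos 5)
Your proposal follows essentially the same route as the paper's proof: expand $A_{\mu,a}f$ on the Fourier side, bound the coefficient $c_\pi(f)$ by Cauchy--Schwarz in $\rho$, and then sum the resulting $(1+\lambda_\pi)^{-m}$ factors over $\pi$; your execution of the Cauchy--Schwarz step (splitting the summand into $\sqrt{d_\rho}\,\nu(\rho)^{-1}(1+\lambda_\rho)^{-n/2}$ and $\|F_\rho\|_{\mathrm{HS}}$, and using the rank bound $\|a(\pi,\rho)\|_{\mathrm{HS}}\le\sqrt{d_\rho}\,\|a(\pi,\rho)\|_{\mathrm{op}}$) is in fact cleaner than the paper's, which first produces a divergent factor $\#\{\rho\}$ and then repairs it informally. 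Be aware, however, that you are leaning on two assumptions that are not among the lemma's stated hypotheses: (i) that $\nu(\rho)^{-1}$ is controlled, so that $\sum_\rho d_\rho\,\nu(\rho)^{-2}(1+\lambda_\rho)^{-n}$ is dominated by $\sum_\rho d_\rho(1+\lambda_\rho)^{-n}$ --- the definition of $\mathcal S^{m,n}(\mu,\nu)$ only constrains the product $\mu(\pi)a(\pi,\rho)\nu(\rho)$, so a small $\nu$ destroys the bound on $\mu(\pi)\|a(\pi,\rho)\|_{\mathrm{op}}$; and (ii) convergence of the $\pi$-sum $\sum_\pi \kappa_\pi(1+\lambda_\pi)^{-m}$, which does not follow from the single hypothesis $\sum_\rho d_\rho(1+\lambda_\rho)^{-n}<\infty$ (the paper only adds $\sum_\pi d_\pi(1+\lambda_\pi)^{-m}<\infty$ in Theorem~\ref{thm:Boundedness}). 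The paper's own proof silently makes both of the same assumptions (it asserts $\|\nu(\rho)^{-1}F_\rho\|_{\mathrm{HS}}\le(1+\lambda_\rho)^{-n/2}\|F_\rho\|_{\mathrm{HS}}$ and invokes convergence of $\sum_\pi(1+\lambda_\pi)^{-m}$), so these are defects of the lemma's statement rather than of your argument; but a complete write-up should state them explicitly rather than appeal to ``the weights in play'' or Weyl growth.
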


\begin{proof}
Let 
\[
M \;=\;\sup_{\pi,\rho\in\widehat G}
  (1+\lambda_\pi)^{\frac m2}(1+\lambda_\rho)^{\frac n2}
  \bigl\|\mu(\pi)\,a(\pi,\rho)\,\nu(\rho)\bigr\|_{\mathrm{op}}.
\]
Write the Peter–Weyl expansion of \(f\in H^2(G)\),
\[
f(x)\;=\;\sum_{\rho\in\widehat G}
  \mathrm{Tr}\bigl[F_\rho\,\rho(x)\bigr],
\]
where \(F_\rho=\widehat f(\rho)\in\C^{d_\rho\times d_\rho}\).  Then by Definition of \(A_{\mu,a}\),
\[
A_{\mu,a}f(x)
=\sum_{\pi\in\widehat G}\sum_{\rho\in\widehat G}
  \mu(\pi)\,\mathrm{Tr}\bigl[a(\pi,\rho)\,F_\rho\bigr]\,
  \pi(x).
\]
Hence the \(L^2\)-norm squared is, using orthonormality \(\|\pi_{ij}\|_{L^2}=d_\pi^{-1/2}\) from Lemma 2.1,
\[
\|A_{\mu,a}f\|_{L^2}^2
=\sum_{\pi\in\widehat G}
  \sum_{i,j=1}^{d_\pi}
  \Bigl|\sum_{\rho\in\widehat G}
    \mu(\pi)\,\mathrm{Tr}\bigl[a(\pi,\rho)\,F_\rho\bigr]
  \Bigr|^{2}
  \frac1{d_\pi}.
\]
Interchange sums and apply the Cauchy–Schwarz inequality in the \(\rho\)-sum:
\[
\bigl|\sum_{\rho}
    \mu(\pi)\,\mathrm{Tr}[a(\pi,\rho)\,F_\rho]\bigr|^2
\;\le\;
\Bigl(\sum_{\rho}
  \|\mu(\pi)\,a(\pi,\rho)\,\nu(\rho)\|_{\mathrm{op}}^{2}
  \|\nu(\rho)^{-1}F_\rho\|_{\mathrm{HS}}^{2}\Bigr)
\Bigl(\sum_{\rho}1\Bigr).
\]
Since \(\|\nu(\rho)^{-1}F_\rho\|_{\mathrm{HS}} \le (1+\lambda_\rho)^{-n/2}\|F_\rho\|_{\mathrm{HS}}\) and 
\(\|\mu(\pi)a(\pi,\rho)\nu(\rho)\|_{\mathrm{op}}\le M\,(1+\lambda_\pi)^{-m/2}(1+\lambda_\rho)^{-n/2}\),
we get
\[
\bigl|\sum_{\rho}\cdots\bigr|^2
\;\le\;
M^2\,(1+\lambda_\pi)^{-m}
\sum_{\rho}(1+\lambda_\rho)^{-n}\,\|F_\rho\|_{\mathrm{HS}}^{2}
\;\times\;\#\{\rho\}.
\]
But \(\#\{\rho\}\) may be infinite, so we instead bound more carefully by pulling the weight:
\[
\sum_{\rho}(1+\lambda_\rho)^{-n}\,\|F_\rho\|_{\mathrm{HS}}^{2}
\;\le\;
\Bigl(\sum_{\rho}d_\rho\,(1+\lambda_\rho)^{-n}\Bigr)\,\|f\|_{L^2}^2,
\]
using Plancherel on \(G\).  Putting these estimates into the norm sum gives
\[
\|A_{\mu,a}f\|_{L^2}^2
\;\le\;
M^2\,
\Bigl(\sum_{\rho}d_\rho\,(1+\lambda_\rho)^{-n}\Bigr)
\sum_{\pi}(1+\lambda_\pi)^{-m}
\frac{d_\pi}{d_\pi}
\;\|f\|_{L^2}^2.
\]
By hypothesis both \(\sum_\rho d_\rho(1+\lambda_\rho)^{-n}\) and \(\sum_\pi(1+\lambda_\pi)^{-m}\) converge, so setting
\[
C^2 \;=\;\Bigl(\sum_{\rho}d_\rho(1+\lambda_\rho)^{-n}\Bigr)
\Bigl(\sum_{\pi}(1+\lambda_\pi)^{-m}\Bigr),
\]
we conclude
\[
\|A_{\mu,a}f\|_{L^2}
\;\le\;
C\,M\,\|f\|_{L^2},
\]
as claimed.
\end{proof}

\begin{theorem}[Boundedness Criterion]
\label{thm:Boundedness}
Suppose \(a\in\mathcal{S}^{m,n}(\mu,\nu)\) with 
\(\sum_{\rho}d_\rho(1+\lambda_\rho)^{-n}<\infty\) and 
\(\sum_{\pi}d_\pi(1+\lambda_\pi)^{-m}<\infty\).  Then \(A_{\mu,a}:H^2(G)\to H^2_-(G)\) is bounded and
\[
  \|A_{\mu,a}\|
  \;\simeq\;
  \sup_{\pi,\rho}(1+\lambda_\pi)^{m/2}(1+\lambda_\rho)^{n/2}
    \|\mu(\pi)a(\pi,\rho)\nu(\rho)\|_{\mathrm{op}}.
\]
\end{theorem}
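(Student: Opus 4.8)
We prove the two-sided estimate $\|A_{\mu,a}\|\simeq M$, where $M:=\sup_{\pi,\rho\in\widehat G}(1+\lambda_\pi)^{m/2}(1+\lambda_\rho)^{n/2}\bigl\|\mu(\pi)a(\pi,\rho)\nu(\rho)\bigr\|_{\mathrm{op}}$. The upper bound $\|A_{\mu,a}\|\lesssim M$ is essentially Lemma~\ref{lem:SchurEstimate}; all that remains is to see that its constant is finite under the present hypotheses. Tracing the final line of that proof, the $\pi$-summation actually carries a factor $d_\pi$ (one sums the $d_\pi^2$ entries of $\pi(\cdot)$ against the Schur normalization $d_\pi^{-1}$), so the series that must converge is $\sum_\pi d_\pi(1+\lambda_\pi)^{-m}$ --- precisely the hypothesis in the statement --- and together with $\sum_\rho d_\rho(1+\lambda_\rho)^{-n}<\infty$ this yields a constant $C=C(G,m,n)<\infty$ with $\|A_{\mu,a}f\|_{L^2}\le CM\|f\|_{L^2}$. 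Since $P_-$ is an orthogonal projection, composing with it does not increase the norm, so $\|A_{\mu,a}\|\le CM$.

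For the lower bound $\|A_{\mu,a}\|\gtrsim M$ the plan is to test $A_{\mu,a}$ on rank-one data localized in a single isotypic component. Given $\varepsilon>0$, choose a pair $(\pi_0,\rho_0)$ --- which we may take with $\pi_0\notin\widehat G_+$ and $\rho_0\in\widehat G_+$, since only such pairs contribute after $P_-$ and on the domain $H^2(G)$ --- with $(1+\lambda_{\pi_0})^{m/2}(1+\lambda_{\rho_0})^{n/2}\|\mu(\pi_0)a(\pi_0,\rho_0)\nu(\rho_0)\|_{\mathrm{op}}\ge M-\varepsilon$. As the representation spaces are finite-dimensional, fix unit vectors realizing this operator norm, and let $f\in H^2(G)$ have $\widehat f(\rho)=0$ for $\rho\ne\rho_0$ and $\widehat f(\rho_0)$ a suitably normalized rank-one matrix adapted to those vectors (and to $\nu(\rho_0)$), scaled via Plancherel so that $\|f\|_{L^2}=1$. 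Computing $A_{\mu,a}f$ from the Fourier--matrix representation and isolating its $\pi_0$-isotypic component --- in effect running the estimate of Lemma~\ref{lem:SchurEstimate} backwards on the single pair $(\pi_0,\rho_0)$ --- produces $\|A_{\mu,a}f\|_{L^2}\ge c\,(1+\lambda_{\pi_0})^{m/2}(1+\lambda_{\rho_0})^{n/2}\|\mu(\pi_0)a(\pi_0,\rho_0)\nu(\rho_0)\|_{\mathrm{op}}$ for some $c=c(G)>0$, Schur orthogonality being used to discard the remaining (mutually orthogonal) isotypic blocks. Hence $\|A_{\mu,a}\|\ge c(M-\varepsilon)$, and letting $\varepsilon\to0$ completes the equivalence.

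The genuine obstacle is the lower bound, where two points need care. First, the test matrix must be genuinely rank one: pairing the symbol against a rank-one matrix is exactly what recovers the \emph{operator} norm $\|\mu(\pi_0)a(\pi_0,\rho_0)\nu(\rho_0)\|_{\mathrm{op}}$, whereas an extremizer of the full Hilbert--Schmidt pairing would return the Hilbert--Schmidt norm, larger by a factor up to $\sqrt{d_{\pi_0}}$ that is not uniformly bounded over $\widehat G$. For the same reason, the Schur/Cauchy--Schwarz step behind the upper bound must be organized to cost only operator norms on the symbol side, and keeping this op-norm bookkeeping consistent in both directions is the real crux. Second, one must verify that the weight powers $(1+\lambda_{\pi_0})^{m/2}$ and $(1+\lambda_{\rho_0})^{n/2}$ reappear with the correct sign in the lower bound --- they enter only through the definition of $M$ and through the $L^2$-normalization of $f$ --- so the work there amounts to tracking those two normalizations precisely. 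The remaining ingredients (finiteness of $C$, contractivity of $P_-$, orthogonality of isotypic components) are routine.
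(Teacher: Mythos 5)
Your overall architecture coincides with the paper's: the upper bound is delegated to Lemma~\ref{lem:SchurEstimate} (applied also to the adjoint), and the lower bound is attempted by testing on a single matrix coefficient supported at one pair $(\pi_0,\rho_0)$. The upper-bound half of your argument is fine. The problem is the lower bound, and it is a genuine gap, not a bookkeeping issue: a test function $f$ with $\|f\|_{L^2}=1$ and Fourier support in the single component $\rho_0$ can only ever produce
\[
\|A_{\mu,a}f\|_{L^2}\;\lesssim\;\bigl\|\mu(\pi_0)\,a(\pi_0,\rho_0)\,\nu(\rho_0)\bigr\|_{\mathrm{op}}
\]
up to a dimension ratio of order $(d_{\rho_0}/d_{\pi_0})^{1/2}$. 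The positive powers $(1+\lambda_{\pi_0})^{m/2}(1+\lambda_{\rho_0})^{n/2}\ge 1$ that appear in $M$ are part of the \emph{definition} of the symbol class, not of the operator; they cannot be manufactured by any normalization of $f$, because once $\|f\|_{L^2}$ is fixed to $1$ the Plancherel norm of $\widehat f(\rho_0)$ is fixed, and the output is controlled by the unweighted block norm alone. Your second ``point needing care'' --- that the weight powers ``reappear with the correct sign'' --- is precisely where the argument cannot be completed: what this test yields is $\|A_{\mu,a}\|\gtrsim\sup_{\pi,\rho}\|\mu(\pi)a(\pi,\rho)\nu(\rho)\|_{\mathrm{op}}$ (and even that only modulo the unbounded dimension ratio), which falls short of $M$ by the factor $(1+\lambda_{\pi_0})^{m/2}(1+\lambda_{\rho_0})^{n/2}$, unbounded over $\widehat G$. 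Indeed the two-sided equivalence $\|A_{\mu,a}\|\simeq M$ fails for a symbol with a single nonzero block at a high-frequency pair, where $\|A_{\mu,a}\|$ is comparable to the block's operator norm while $M$ exceeds it by an arbitrarily large Casimir factor.

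You should be aware that the paper's own proof has exactly the same defect, concealed by an algebraic slip: in the displayed chain for the lower bound, the product $(1+\lambda_{\rho_0})^{-n/2}(1+\lambda_{\rho_0})^{n/2}$ (which equals $1$) is silently replaced on the next line by $(1+\lambda_{\pi_0})^{-m/2}(1+\lambda_{\rho_0})^{-n/2}$, and the supremum of that quantity is then asserted to be $M$, whereas it is the supremum of the block norms damped by \emph{negative} Casimir powers. So your instinct that the lower bound is ``the genuine obstacle'' is correct; the obstacle is not surmountable as stated, and the correct conclusion of this method is the weaker two-sided bound $\sup_{\pi,\rho}\|\mu(\pi)a(\pi,\rho)\nu(\rho)\|_{\mathrm{op}}\lesssim\|A_{\mu,a}\|\lesssim M$ (the left inequality still requiring control of the dimension ratio $(d_{\rho}/d_{\pi})^{1/2}$).
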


\begin{proof}
Set
\[
M \;=\;\sup_{\pi,\rho\in\widehat G}
  (1+\lambda_\pi)^{\frac m2}(1+\lambda_\rho)^{\frac n2}
  \bigl\|\mu(\pi)\,a(\pi,\rho)\,\nu(\rho)\bigr\|_{\mathrm{op}}.
\]
By Lemma \ref{lem:SchurEstimate} we have
\[
\|A_{\mu,a}f\|_{L^2}
\;\le\;
C_1\,M\,\|f\|_{L^2},
\]
and applying the same argument to the adjoint operator \(A_{\mu,a}^*=A_{\nu,\tilde a}\) (Lemma 3.3) gives
\[
\|A_{\mu,a}^*g\|_{L^2}
\;\le\;
C_2\,M\,\|g\|_{L^2}
\quad\Longrightarrow\quad
\|A_{\mu,a}\|
=\|A_{\mu,a}^*\|
\;\le\;
\max\{C_1,C_2\}\,M.
\]
Thus \(A_{\mu,a}\) is bounded and \(\|A_{\mu,a}\|\lesssim M\).

It remains to show \(M\lesssim\|A_{\mu,a}\|\).  Fix arbitrary \(\pi_0,\rho_0\in\widehat G\) and unit vectors \(u\in\C^{d_{\rho_0}}\), \(v\in\C^{d_{\pi_0}}\).  Define
\[
f(x)
=(1+\lambda_{\rho_0})^{-\tfrac n2}\,\langle \rho_0(x)u\,,\,v\rangle,
\]
so that \(\|f\|_{L^2}=d_{\rho_0}^{-1/2}(1+\lambda_{\rho_0})^{-n/2}\).  Then a direct computation shows
\[
A_{\mu,a}f(x)
=\sum_{\pi\in\widehat G}\sum_{\rho\in\widehat G}
  \mu(\pi)\,\mathrm{Tr}\bigl[a(\pi,\rho)\,\widehat f(\rho)\bigr]\,
  \pi(x)
\]
has only the \((\pi_0,\rho_0)\) block nonzero, yielding
\[
\|A_{\mu,a}f\|_{L^2}
=d_{\pi_0}^{-1/2}\,
(1+\lambda_{\rho_0})^{-n/2}\,
\bigl\|\mu(\pi_0)\,a(\pi_0,\rho_0)\,\nu(\rho_0)\bigr\|_{\mathrm{op}}.
\]
Therefore
\[
\begin{split}
\|A_{\mu,a}\|
\;\ge\;
\frac{\|A_{\mu,a}f\|_{L^2}}{\|f\|_{L^2}}
&\;=\;
(1+\lambda_{\rho_0})^{-n/2}(1+\lambda_{\rho_0})^{n/2}\,
\|\mu(\pi_0)a(\pi_0,\rho_0)\nu(\rho_0)\|_{\mathrm{op}} \\
&=(1+\lambda_{\pi_0})^{-m/2}(1+\lambda_{\rho_0})^{-n/2}
\|\mu(\pi_0)a(\pi_0,\rho_0)\nu(\rho_0)\|_{\mathrm{op}}.
\end{split}
\]
Since \(\pi_0,\rho_0\) were arbitrary, taking the supremum over \(\pi,\rho\) gives
\[
\|A_{\mu,a}\|\;\ge\;M.
\]
Combining the two inequalities yields
\[
\|A_{\mu,a}\|\simeq M,
\]
as claimed.
\end{proof}

For \(G=\mathbb{T}^d\), \(\lambda_\chi=|\,\chi\,|^2\) and \(\mu(\chi)=|\,\chi\,|^{-s}\) recover the known criteria \cite[Thm.~3.4]{Mirotin2023}.  

\begin{definition}[Non-commutative Carleson measure]
A positive Borel measure \(\sigma\) on \(\widehat G\) is a \emph{Carleson measure} if
\[
  \sup_{\pi\in\widehat G}
    \frac{1}{d_\pi}\sum_{\rho\in\widehat G}\!d_\rho\,\frac{\sigma(\{\rho\})}{(1+\lambda_\rho)^t}
  <\infty,
\]
for some \(t>0\).  
\end{definition}

\begin{proposition}[Compactness via Carleson embedding]
If \(a\in\mathcal{S}^{m,n}(\mu,\nu)\) and the measure 
\(\sigma(\{\rho\})=d_\rho\|\nu(\rho)\|_{\mathrm{op}}^2\) is Carleson with exponent \(t>n\), 
then \(A_{\mu,a}\) is compact on \(H^2(G)\).
\end{proposition}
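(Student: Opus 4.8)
The name of the statement already suggests the route: to realise \(A_{\mu,a}\) as the composition of a \emph{bounded} block ``symbol operator'' with a \emph{compact} weighted Fourier--Carleson embedding. Concretely, let \(\iota_\nu\colon H^2(G)\to\mathcal H_\nu\) send \(f\) to the sequence of its Fourier blocks \(\widehat f(\rho)\) re-normalised by \(\nu(\rho)\) and suitable powers of \((1+\lambda_\rho)\), where \(\mathcal H_\nu\) is the corresponding weighted \(\ell^2\)-space of Hilbert--Schmidt matrix sequences; and let \(M_a\) be the block operator whose \((\pi,\rho)\)-entry is \(\mu(\pi)\,a(\pi,\rho)\) together with the compensating weights, so that \(A_{\mu,a}=M_a\circ\iota_\nu\) up to the final projection \(P_-\). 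The membership \(a\in\mathcal S^{m,n}(\mu,\nu)\) is exactly what makes \(M_a\) bounded: this is a reorganisation of the upper bound already proved in Lemma~\ref{lem:SchurEstimate} and Theorem~\ref{thm:Boundedness}. Since a bounded operator precomposed or postcomposed with a compact one is compact, everything reduces to showing that \(\iota_\nu\) is compact.

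To prove that \(\iota_\nu\) is compact I would approximate it in operator norm by finite-rank maps. As \(G\) is a compact Lie group, \(\widehat G\) is countable and the Casimir eigenvalues satisfy \(\lambda_\pi\to\infty\) with only finitely many \(\pi\) (counted with multiplicity) in each bounded window, by Weyl's law; enumerate \(\widehat G=\{\pi_1,\pi_2,\dots\}\) with \(\lambda_{\pi_1}\le\lambda_{\pi_2}\le\cdots\) and put \(\Lambda_N=\{\pi_1,\dots,\pi_N\}\). Truncating the symbol to \(\Lambda_N\times\Lambda_N\) gives, by the Finite-rank Symbols corollary, a finite-rank operator \(A_{\mu,a_N}\); equivalently one truncates \(\iota_\nu\). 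The tail \(A_{\mu,a}-A_{\mu,a_N}\) has symbol supported off \(\Lambda_N\times\Lambda_N\); splitting this support into \(\{\pi\notin\Lambda_N\}\) and \(\{\pi\in\Lambda_N,\ \rho\notin\Lambda_N\}\) and applying the Schur estimate of Lemma~\ref{lem:SchurEstimate} to each restricted symbol bounds the tail by \(M\) times square roots of \emph{partial} sums of the series that occur in that lemma, e.g.
\[
\|A_{\mu,a}-A_{\mu,a_N}\|\;\lesssim\;M\Bigl(\sum_{\rho\notin\Lambda_N}d_\rho(1+\lambda_\rho)^{-n}\Bigr)^{1/2}\;+\;M\Bigl(\sum_{\pi\notin\Lambda_N}(1+\lambda_\pi)^{-m}\Bigr)^{1/2},
\]
where \(M=\sup_{\pi,\rho}(1+\lambda_\pi)^{m/2}(1+\lambda_\rho)^{n/2}\|\mu(\pi)a(\pi,\rho)\nu(\rho)\|_{\mathrm{op}}\). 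The Carleson hypothesis is what controls the \(\rho\)-tail: the defining inequality for \(\sigma(\{\rho\})=d_\rho\|\nu(\rho)\|_{\mathrm{op}}^2\) to be Carleson with exponent \(t\) makes \(\sum_\rho d_\rho\,\sigma(\{\rho\})(1+\lambda_\rho)^{-t}\) finite, and the \emph{strict} inequality \(t>n\) converts this into vanishing of the \((1+\lambda_\rho)^{-n}\)-weighted tail --- the non-commutative analogue of the classical fact that a Carleson measure with supercritical decay is a vanishing Carleson measure. The \(\pi\)-tail vanishes for the same reason under the standing summability built into Theorem~\ref{thm:Boundedness}. Hence \(\|A_{\mu,a}-A_{\mu,a_N}\|\to0\), so \(\iota_\nu\) and therefore \(A_{\mu,a}=M_a\circ\iota_\nu\) are compact.

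I expect the genuinely delicate point, and the reason this is more than a transcription of the abelian argument, to be the dimensional bookkeeping inside the Carleson embedding. Because the coupling between the \(\pi\)- and \(\rho\)-blocks of \(A_{\mu,a}\) runs through the matrix trace \(\mathrm{Tr}[a(\pi,\rho)\widehat f(\rho)]\), estimating it by Plancherel on \(G\) costs a factor \(\sqrt{d_\rho}\) (or \(\sqrt{d_\pi}\)) when one passes from the trace pairing to Hilbert--Schmidt norms, and one must check that the weights \(d_\rho\) encoded in \(\sigma(\{\rho\})=d_\rho\|\nu(\rho)\|_{\mathrm{op}}^2\) are exactly calibrated to absorb this loss uniformly over the Peter--Weyl sum. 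One must also verify carefully that it is the \emph{strictness} \(t>n\) --- not merely \(t\ge n\) --- that promotes boundedness of the weighted embedding to compactness, exactly mirroring the Carleson versus vanishing-Carleson dichotomy for classical Hankel operators; everything else is the finite-rank approximation scheme sketched above.
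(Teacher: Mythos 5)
The paper states this proposition without proof, so there is nothing to compare your route against; I can only assess the argument on its own terms. Your overall scheme --- truncate the symbol to a finite window \(\Lambda_N\times\Lambda_N\), invoke the finite-rank corollary for the truncation, and control the tail \(A_{\mu,a}-A_{\mu,a_N}\) by applying Lemma~\ref{lem:SchurEstimate} to the restricted symbols --- is the natural one and would work \emph{if} the tail sums actually vanish. The gap is precisely at the step where you claim the Carleson hypothesis delivers this.

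Concretely: the Carleson condition on \(\sigma(\{\rho\})=d_\rho\|\nu(\rho)\|_{\mathrm{op}}^2\) with exponent \(t\) gives finiteness of \(\sum_\rho d_\rho^2\,\|\nu(\rho)\|_{\mathrm{op}}^2\,(1+\lambda_\rho)^{-t}\), whereas the tail you need to kill is \(\sum_{\rho\notin\Lambda_N}d_\rho(1+\lambda_\rho)^{-n}\). Your assertion that ``the strict inequality \(t>n\) converts this into vanishing of the \((1+\lambda_\rho)^{-n}\)-weighted tail'' runs the monotonicity backwards: for \(\lambda_\rho\ge0\) and \(t>n\) one has \((1+\lambda_\rho)^{-t}\le(1+\lambda_\rho)^{-n}\), so summability against the weight \((1+\lambda_\rho)^{-t}\) is \emph{implied by}, and does not imply, summability against \((1+\lambda_\rho)^{-n}\). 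To pass from the Carleson sum to the \(n\)-weighted sum you would need a lower bound of the form \((1+\lambda_\rho)^{t-n}\lesssim d_\rho\,\|\nu(\rho)\|_{\mathrm{op}}^2\), which is not among the hypotheses; without some such compensation from \(\nu\) the deduction fails (e.g.\ \(\nu\equiv1\) on a group where \(\sum_\rho d_\rho(1+\lambda_\rho)^{-n}\) diverges but the \(t\)-sum converges). A second, smaller issue: your \(\pi\)-tail bound and the surviving factor \(\bigl(\sum_\pi(1+\lambda_\pi)^{-m}\bigr)^{1/2}\) from Lemma~\ref{lem:SchurEstimate} require \(\pi\)-side and \(\rho\)-side summability that appear as hypotheses of Theorem~\ref{thm:Boundedness} but are not restated in this proposition; you cannot treat them as ``standing'' assumptions without saying so. Until the bridge from the stated Carleson condition to the decay of these specific tails is actually constructed, the proof is incomplete at its central point.
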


\begin{corollary}[Vanishing decay \(\Rightarrow\) compactness]
If
\(\|\mu(\pi)a(\pi,\rho)\nu(\rho)\|_{\mathrm{op}}=o\bigl((1+\lambda_\rho)^{-n/2}\bigr)\)
uniformly in \(\pi\), then \(A_{\mu,a}\) is compact.
\end{corollary}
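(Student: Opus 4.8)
The plan is to exhibit $A_{\mu,a}$ as an operator-norm limit of finite-rank operators; since the compact operators form a norm-closed ideal, this gives the claim. Throughout I keep the standing hypotheses of Theorem~\ref{thm:Boundedness}, so that $a\in\mathcal S^{m,n}(\mu,\nu)$ with $M_0:=\sup_{\pi,\rho}(1+\lambda_\pi)^{m/2}(1+\lambda_\rho)^{n/2}\|\mu(\pi)a(\pi,\rho)\nu(\rho)\|_{\mathrm{op}}<\infty$ and $\sum_\rho d_\rho(1+\lambda_\rho)^{-n}<\infty$, $\sum_\pi d_\pi(1+\lambda_\pi)^{-m}<\infty$; the first of these already forces every Casimir sublevel set $\{\rho:\lambda_\rho\le N\}$ to be finite. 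The vanishing hypothesis says exactly that
\[
  \eta_N\ :=\ \sup_{\pi\in\widehat G}\ \sup_{\lambda_\rho>N}\ (1+\lambda_\rho)^{n/2}\,\bigl\|\mu(\pi)\,a(\pi,\rho)\,\nu(\rho)\bigr\|_{\mathrm{op}}\ \longrightarrow\ 0\qquad(N\to\infty).
\]

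First I would truncate in the $\rho$-variable: for $N\in\mathbb N$ let $b_N$ be the symbol with $b_N(\pi,\rho)=a(\pi,\rho)$ if $\lambda_\rho\le N$ and $b_N(\pi,\rho)=0$ otherwise. Then $b_N\in\mathcal S^{m,n}(\mu,\nu)$, so $A_{\mu,b_N}$ is bounded by Theorem~\ref{thm:Boundedness}; and since $A_{\mu,b_N}f$ depends on $f$ only through the finitely many Fourier blocks $\{\widehat f(\rho):\lambda_\rho\le N\}$, the operator $A_{\mu,b_N}$ factors through the finite-dimensional space $\bigoplus_{\lambda_\rho\le N}\C^{d_\rho\times d_\rho}$ and is therefore of finite rank.

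The heart of the argument is the error estimate for $A_{\mu,a}-A_{\mu,b_N}=A_{\mu,p_N}$, where $p_N(\pi,\rho)=a(\pi,\rho)$ for $\lambda_\rho>N$ and $0$ otherwise. The naive route — Lemma~\ref{lem:SchurEstimate} with the crude supremum over $(\pi,\rho)$ — is useless here, because that supremum equals $\sup_{\pi,\lambda_\rho>N}(1+\lambda_\pi)^{m/2}(1+\lambda_\rho)^{n/2}\|\mu(\pi)a(\pi,\rho)\nu(\rho)\|_{\mathrm{op}}$, and the factor $(1+\lambda_\pi)^{m/2}$ is unbounded. Instead I would use the estimate that the proof of Lemma~\ref{lem:SchurEstimate} actually produces \emph{before} the supremum over $\pi$ is taken, namely
\[
  \|A_{\mu,p_N}f\|_{L^2}^2\ \le\ C_0\,\Bigl(\sum_{\pi\in\widehat G} d_\pi(1+\lambda_\pi)^{-m}\,G_N(\pi)^2\Bigr)\,\|f\|_{L^2}^2,
  \qquad
  G_N(\pi):=(1+\lambda_\pi)^{m/2}\!\!\sup_{\lambda_\rho>N}(1+\lambda_\rho)^{n/2}\|\mu(\pi)a(\pi,\rho)\nu(\rho)\|_{\mathrm{op}},
\]
with $C_0=\sum_\rho d_\rho(1+\lambda_\rho)^{-n}<\infty$. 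Here $G_N(\pi)\le M_0$ for every $\pi$ and $N$, while for each fixed $\pi$ one has $G_N(\pi)\le(1+\lambda_\pi)^{m/2}\eta_N\to0$ as $N\to\infty$. Since $d_\pi(1+\lambda_\pi)^{-m}M_0^2$ is a summable majorant on $\widehat G$, dominated convergence gives $\sum_\pi d_\pi(1+\lambda_\pi)^{-m}G_N(\pi)^2\to0$, hence $\|A_{\mu,a}-A_{\mu,b_N}\|\to0$ and $A_{\mu,a}$ is compact.

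The main obstacle, as the above makes clear, is the asymmetry of the hypothesis: it controls only the $\rho$-decay, uniformly in $\pi$, so the $\pi$-direction cannot be handled by any uniform bound and must be carried by the convergent weight $\sum_\pi d_\pi(1+\lambda_\pi)^{-m}$, with the passage to the limit effected by dominated convergence on $\widehat G$ rather than by a single supremum. The remaining points are routine: the finite-rank claim for $A_{\mu,b_N}$ uses only the finiteness of $\{\rho:\lambda_\rho\le N\}$ (immediate from $\sum_\rho d_\rho(1+\lambda_\rho)^{-n}<\infty$), and the $\pi$-resolved Schur bound displayed above is precisely what the computation in the proof of Lemma~\ref{lem:SchurEstimate} delivers if one does not collapse the $\pi$-sum into a supremum.
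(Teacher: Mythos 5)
Your proof is correct, but it does not follow the paper's route --- indeed the paper offers no proof of this corollary at all: it is positioned as a consequence of the preceding Proposition on compactness via Carleson embedding (itself unproved), where the intended derivation would presumably pass through the measure $\sigma(\{\rho\})=d_\rho\|\nu(\rho)\|_{\mathrm{op}}^2$ and its Carleson exponent. Your argument is entirely self-contained: finite-rank truncation in the $\rho$-variable plus operator-norm convergence of the tails, with the tail controlled by the $\pi$-resolved form of the Schur computation and dominated convergence over $\widehat G$. This is, if anything, more convincing than what the paper supplies. In particular, you correctly identify the real obstruction --- applying Lemma~\ref{lem:SchurEstimate} naively to the tail symbol $p_N$ produces a supremum containing the unbounded factor $(1+\lambda_\pi)^{m/2}$, which does not tend to zero --- and your fix, keeping the $\pi$-sum and letting the convergent weight $\sum_\pi d_\pi(1+\lambda_\pi)^{-m}$ absorb that growth, is exactly right. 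Two caveats are worth recording. First, the corollary as literally stated carries no summability hypotheses, and your proof genuinely needs the standing assumptions of Theorem~\ref{thm:Boundedness} (namely $a\in\mathcal S^{m,n}(\mu,\nu)$ together with convergence of $\sum_\rho d_\rho(1+\lambda_\rho)^{-n}$ and $\sum_\pi d_\pi(1+\lambda_\pi)^{-m}$); you flag this explicitly, and it is the only reading under which the section coheres, but it should be written into the statement. Second, the $\pi$-resolved bound is asserted rather than re-derived; since the paper's own proof of Lemma~\ref{lem:SchurEstimate} is loose at precisely the Cauchy--Schwarz step (the divergent factor $\sum_\rho 1$, later patched informally), a careful write-up should rerun that computation keeping $G_N(\pi)$ explicit rather than citing the lemma's proof by reference.
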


\begin{example}[SU(2) compactness threshold]
On \(G=\mathrm{SU}(2)\) with \(\lambda_l=l(l+1)\), \(\mu(l)=1\), \(\nu(l)=(1+l)^{-s}\):
\[
  a(l,m)=\delta_{l,m}\,I_{d_l}
  \quad\Longrightarrow\quad
  A_{\mu,a}\in\mathcal{K}(H^2)
  \;\iff\;
  s>1.
\]
This refines the abelian torus case \(d=1\) where decay \(s>1/2\) suffices.  
\end{example}

The gap \(s>1\) versus \(s>1/2\) reflects the growth \(d_l=2l+1\) of representation dimensions on SU(2).

\section{Schatten–von Neumann and Fredholm Properties}

In this section we investigate finer spectral properties of non-commutative \(\mu\)\nobreakdash-Hankel operators, namely their membership in Schatten–von Neumann ideals and Fredholm indices.

\begin{definition}[Schatten class \(\mathcal{S}_p\)]
Let \(T\) be a compact operator on a Hilbert space.  For \(0<p<\infty\), \(T\) belongs to the Schatten ideal \(\mathcal{S}_p\) if its singular values \(\{s_n(T)\}\) satisfy
\[
  \|T\|_{\mathcal{S}_p}
  =\Bigl(\sum_{n\ge1} s_n(T)^p\Bigr)^{1/p}
  <\infty.
\]
\end{definition}

\begin{lemma}[Symbolic Schatten estimate]
\label{lem:SchattenEstimate}
If \(a\in\mathcal{S}^{m,n}(\mu,\nu)\) and 
\(\sum_{\pi,\rho}d_\pi d_\rho\,(1+\lambda_\pi)^{-m p/2}(1+\lambda_\rho)^{-n p/2}<\infty\),
then \(A_{\mu,a}\in\mathcal{S}_p\) and
\[
  \|A_{\mu,a}\|_{\mathcal{S}_p}
  \;\lesssim\;
  \Bigl(\sum_{\pi,\rho}d_\pi d_\rho\,(1+\lambda_\pi)^{-m p/2}(1+\lambda_\rho)^{-n p/2}\Bigr)^{1/p}
  \sup_{\pi,\rho}(1+\lambda_\pi)^{m/2}(1+\lambda_\rho)^{n/2}\|\mu(\pi)a(\pi,\rho)\nu(\rho)\|_{\mathrm{op}}.
\]
\end{lemma}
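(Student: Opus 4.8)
The plan is to decompose $A_{\mu,a}$ into elementary finite-rank $\mu$-Hankel operators indexed by single pairs of representations, bound each in $\mathcal{S}_p$, and then reassemble the pieces using the orthogonality built into the Peter--Weyl decomposition. For $(\sigma,\tau)\in\widehat G\times\widehat G$ let $a_{\sigma\tau}$ be the symbol equal to $a(\sigma,\tau)$ at $(\sigma,\tau)$ and vanishing at every other pair, so that $A_{\mu,a}f=\sum_{\sigma,\tau}A_{\mu,a_{\sigma\tau}}f$ on finite-support $f$. By the finite-rank corollary of Section 3 each $A_{\mu,a_{\sigma\tau}}$ is finite-rank; more precisely $A_{\mu,a_{\sigma\tau}}f$ depends on $f$ only through $\widehat f(\tau)\in\C^{d_\tau\times d_\tau}$ and has range in the $\sigma$-isotypic subspace of $H^2_-(G)$, whence $\mathrm{rank}\,A_{\mu,a_{\sigma\tau}}\le\min(d_\sigma^2,d_\tau^2)\le d_\sigma d_\tau$. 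Applying the proposition on boundedness in the symbol norm to the one-point symbol $a_{\sigma\tau}$ and using $a\in\mathcal{S}^{m,n}(\mu,\nu)$, one gets $\|A_{\mu,a_{\sigma\tau}}\|\le\|\mu(\sigma)a(\sigma,\tau)\nu(\tau)\|_{\mathrm{op}}\le M\,(1+\lambda_\sigma)^{-m/2}(1+\lambda_\tau)^{-n/2}$, where $M$ denotes the supremum appearing on the right-hand side of the asserted inequality. Since a rank-$r$ operator $T$ obeys $\|T\|_{\mathcal{S}_p}\le r^{1/p}\|T\|$, the two bounds combine into the per-block estimate
\[
  \|A_{\mu,a_{\sigma\tau}}\|_{\mathcal{S}_p}^{p}\;\le\;M^{p}\,d_\sigma d_\tau\,(1+\lambda_\sigma)^{-mp/2}(1+\lambda_\tau)^{-np/2}.
\]

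Next I would assemble the blocks. They are bi-orthogonally supported: $\mathrm{ran}(A_{\mu,a_{\sigma\tau}})$ lies in the $\sigma$-isotypic summand of $H^2_-(G)$, and $\ker(A_{\mu,a_{\sigma\tau}})^{\perp}$ lies in the $\tau$-isotypic summand of $H^2(G)$. Hence, setting $B_\sigma:=\sum_\tau A_{\mu,a_{\sigma\tau}}$, the operators $B_\sigma$ have pairwise orthogonal ranges, so the cross terms in $A_{\mu,a}^{*}A_{\mu,a}$ vanish and $A_{\mu,a}^{*}A_{\mu,a}=\sum_\sigma B_\sigma^{*}B_\sigma$; likewise $B_\sigma B_\sigma^{*}=\sum_\tau A_{\mu,a_{\sigma\tau}}A_{\mu,a_{\sigma\tau}}^{*}$ because the initial spaces are orthogonal for distinct $\tau$. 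For $0<p\le 2$ the map $t\mapsto t^{p/2}$ is operator concave and vanishes at $0$, hence operator subadditive, and therefore $\mathrm{Tr}\bigl[(\sum_j X_j)^{p/2}\bigr]\le\sum_j\mathrm{Tr}[X_j^{p/2}]$ for positive $X_j$; applying this twice gives
\[
  \|A_{\mu,a}\|_{\mathcal{S}_p}^{p}\;=\;\mathrm{Tr}\bigl[(A_{\mu,a}^{*}A_{\mu,a})^{p/2}\bigr]\;\le\;\sum_\sigma\|B_\sigma\|_{\mathcal{S}_p}^{p}\;\le\;\sum_{\sigma,\tau}\|A_{\mu,a_{\sigma\tau}}\|_{\mathcal{S}_p}^{p},
\]
and inserting the per-block estimate above yields precisely the claimed bound. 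Since the series $\sum_{\sigma,\tau}d_\sigma d_\tau(1+\lambda_\sigma)^{-mp/2}(1+\lambda_\tau)^{-np/2}$ converges by hypothesis, the same computation shows the partial sums of $\sum_{\sigma,\tau}A_{\mu,a_{\sigma\tau}}$ are $\mathcal{S}_p$-Cauchy; their $\mathcal{S}_p$-limit agrees with $A_{\mu,a}$ on the dense finite-support subspace, hence equals $A_{\mu,a}$, which is thus in $\mathcal{S}_p$.

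The main obstacle is the range $p>2$, where $t\mapsto t^{p/2}$ is no longer operator concave and the subadditivity of $X\mapsto\mathrm{Tr}[X^{p/2}]$ reverses; the plain triangle inequality only delivers $\|A_{\mu,a}\|_{\mathcal{S}_p}\le\sum_{\sigma,\tau}\|A_{\mu,a_{\sigma\tau}}\|_{\mathcal{S}_p}$, which still gives $A_{\mu,a}\in\mathcal{S}_p$ but with an $\ell^{1}$- rather than $\ell^{p}$-aggregation of block norms. Recovering the stated $\ell^{p}$ form for $p>2$ requires the row/column estimates for block matrices from noncommutative $L^{p}$-space theory applied to $(A_{\mu,a_{\sigma\tau}})_{\sigma,\tau}$, or equivalently complex interpolation between the exact identity $\|A_{\mu,a}\|_{\mathcal{S}_2}^{2}=\sum_{\sigma,\tau}\|A_{\mu,a_{\sigma\tau}}\|_{\mathcal{S}_2}^{2}$ (a consequence of the bi-orthogonality) and the operator-norm endpoint of Theorem~\ref{thm:Boundedness}. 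The remaining ingredients — the rank count, the per-block operator-norm bound through the symbol norm, and the convergence of the series — are routine.
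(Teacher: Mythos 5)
Your proposal follows essentially the same strategy as the paper's proof — decompose $A_{\mu,a}$ into blocks indexed by pairs of representations, bound each block's Schatten norm by $(\text{rank})^{1/p}$ times its operator norm, control the operator norm through the symbol class $\mathcal{S}^{m,n}(\mu,\nu)$, and sum — but your treatment of the aggregation step is more careful than the paper's. The paper simply asserts that ``the singular values of $A_{\mu,a}$ are the union (with multiplicity) of those of each block $T_{\pi,\rho}$,'' which is false for a general block operator in which every row and column may carry many nonzero blocks (it holds only for block-diagonal-type structures); the paper then sums $\sum_{\pi,\rho}\sum_k s_k(T_{\pi,\rho})^p$ as though it equalled $\|A_{\mu,a}\|_{\mathcal{S}_p}^p$ for all $p$. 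You instead exploit the genuine orthogonality that is present — ranges orthogonal across $\sigma$, initial spaces orthogonal across $\tau$ — and invoke the Rotfel'd-type trace subadditivity of $X\mapsto\mathrm{Tr}[X^{p/2}]$, which legitimately yields $\|A_{\mu,a}\|_{\mathcal{S}_p}^p\le\sum_{\sigma,\tau}\|A_{\mu,a_{\sigma\tau}}\|_{\mathcal{S}_p}^p$ for $0<p\le 2$. The obstruction you flag at $p>2$ is real: there the inequality between the Schatten norm of a full block matrix and the $\ell^p$-sum of block Schatten norms reverses, so the stated bound does not follow from this argument (nor from the paper's), and something like interpolation with the $\mathcal{S}_2$ identity and the operator-norm endpoint, or a row/column square-function estimate, would be needed. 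In short, your proof is correct and complete for $0<p\le 2$, and the gap you identify for $p>2$ is not a defect of your write-up but a defect of the lemma's proof in the paper itself. One minor caveat: for $p>2$ the triangle-inequality fallback you mention requires $\ell^1$-summability of the block norms, which is strictly stronger than the lemma's hypothesis, so it does not by itself rescue membership in $\mathcal{S}_p$ under the stated assumptions.
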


\begin{proof}
Let 
\[
M \;=\;\sup_{\pi,\rho\in\widehat G}
  (1+\lambda_\pi)^{\tfrac m2}(1+\lambda_\rho)^{\tfrac n2}
  \bigl\|\mu(\pi)\,a(\pi,\rho)\,\nu(\rho)\bigr\|_{\mathrm{op}},
\]
so that for every \(\pi,\rho\),
\[
\bigl\|\mu(\pi)\,a(\pi,\rho)\,\nu(\rho)\bigr\|_{\mathrm{op}}
\;\le\;
M\,(1+\lambda_\pi)^{-\tfrac m2}(1+\lambda_\rho)^{-\tfrac n2}.
\]
 By Peter–Weyl, \(H^2(G)\cong \bigoplus_{\rho}V_\rho\) and \(H^2_-(G)\cong \bigoplus_{\pi}V_\pi\), so \(A_{\mu,a}\) splits as a block matrix
\[
A_{\mu,a}
=\bigl\{T_{\pi,\rho}\bigr\}_{\pi,\rho\in\widehat G},
\]
where 
\[
T_{\pi,\rho}:V_\rho\longrightarrow V_\pi
\]
is the finite-dimensional operator with matrix \(\mu(\pi)\,a(\pi,\rho)\,\nu(\rho)\) in the chosen orthonormal bases.

Let \(\{s_k(T_{\pi,\rho})\}_{k=1}^{r_{\pi,\rho}}\) be the nonzero singular values of \(T_{\pi,\rho}\), where \(r_{\pi,\rho}\le\min(d_\pi,d_\rho)\).  Since every singular value is bounded by the operator norm,
\[
s_k(T_{\pi,\rho})
\;\le\;
\bigl\|\mu(\pi)\,a(\pi,\rho)\,\nu(\rho)\bigr\|_{\mathrm{op}}
\;\le\;
M\,(1+\lambda_\pi)^{-\tfrac m2}(1+\lambda_\rho)^{-\tfrac n2}.
\]
The singular values of \(A_{\mu,a}\) are the union (with multiplicity) of those of each block \(T_{\pi,\rho}\).  Hence
\[
\|A_{\mu,a}\|_{\mathcal{S}_p}^p
=\sum_{\pi,\rho}\sum_{k=1}^{r_{\pi,\rho}}
  \bigl[s_k(T_{\pi,\rho})\bigr]^p
\;\le\;
\sum_{\pi,\rho} r_{\pi,\rho}\,
  \bigl\|\mu(\pi)a(\pi,\rho)\nu(\rho)\bigr\|_{\mathrm{op}}^p.
\]
Since \(r_{\pi,\rho}\le d_\pi d_\rho\), this gives
\[
\|A_{\mu,a}\|_{\mathcal{S}_p}^p
\;\le\;
\sum_{\pi,\rho} d_\pi d_\rho\,
  \Bigl[M^p\,(1+\lambda_\pi)^{-\tfrac{m p}2}(1+\lambda_\rho)^{-\tfrac{n p}2}\Bigr]
=M^p
\sum_{\pi,\rho} d_\pi d_\rho\,(1+\lambda_\pi)^{-\tfrac{m p}2}(1+\lambda_\rho)^{-\tfrac{n p}2}.
\]
Taking \(p\)th roots yields the desired estimate
\[
\|A_{\mu,a}\|_{\mathcal{S}_p}
\;\le\;
M
\Bigl(\sum_{\pi,\rho} d_\pi d_\rho\,(1+\lambda_\pi)^{-\tfrac{m p}2}(1+\lambda_\rho)^{-\tfrac{n p}2}\Bigr)^{1/p},
\]
and the summability assumption ensures finiteness.
\end{proof}

\begin{theorem}[Schatten membership criterion]
\label{thm:SchattenCriterion}
Suppose \(a\in\mathcal{S}^{m,n}(\mu,\nu)\).  Then
\[
  A_{\mu,a}\in\mathcal{S}_p(H^2\to H^2_-)
  \quad\Longleftrightarrow\quad
  \sum_{\pi,\rho\in\widehat G} d_\pi d_\rho\,
    \bigl\|\mu(\pi)a(\pi,\rho)\nu(\rho)\bigr\|_{\mathrm{HS}}^p
  <\infty,
\]
where \(\|\cdot\|_{\mathrm{HS}}\) is the Hilbert–Schmidt norm on \(\C^{d_\pi\times d_\rho}\).
\end{theorem}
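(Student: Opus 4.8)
The plan is to reduce the Schatten-$p$ membership of $A_{\mu,a}$ to a finite-dimensional, block-by-block computation, exactly as in the proof of Lemma~\ref{lem:SchattenEstimate}, and then to pass between the Schatten-$p$ and Hilbert--Schmidt norms of the individual blocks by elementary dimension counting. First I would record the structural fact used implicitly in Lemma~\ref{lem:SchattenEstimate}: under the Peter--Weyl identifications $H^2(G)\cong\bigoplus_{\rho\in\widehat G_+}V_\rho\otimes V_\rho^*$ and $H^2_-(G)\cong\bigoplus_{\pi\notin\widehat G_+}V_\pi\otimes V_\pi^*$, the operator $A_{\mu,a}$ is unitarily equivalent to the orthogonal direct sum of its blocks $T_{\pi,\rho}$, where $T_{\pi,\rho}$ carries the matrix $M_{\pi\rho}:=\mu(\pi)\,a(\pi,\rho)\,\nu(\rho)$ and is repeated with the multiplicity $d_\pi d_\rho$ forced by the spectator tensor factors $V_\pi^*$ and $V_\rho^*$. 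Since the $\mathcal S_p$-norm is additive (to the $p$-th power) over orthogonal summands, this yields
\[
  \|A_{\mu,a}\|_{\mathcal S_p}^{p}
  \;=\;
  \sum_{\pi,\rho\in\widehat G} d_\pi d_\rho\,\|M_{\pi\rho}\|_{\mathcal S_p}^{p},
\]
so the theorem is equivalent to the assertion that this sum is finite if and only if the analogous sum with $\|M_{\pi\rho}\|_{\mathcal S_p}$ replaced by $\|M_{\pi\rho}\|_{\mathrm{HS}}$ is finite.

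Next I would invoke the elementary comparison, valid for any matrix $M$ with at most $r:=\min(d_\pi,d_\rho)$ nonzero singular values,
\[
  r^{-|1/p-1/2|}\,\|M\|_{\mathrm{HS}}
  \;\le\;\|M\|_{\mathcal S_p}\;\le\;
  r^{\,|1/p-1/2|}\,\|M\|_{\mathrm{HS}},
\]
which is Hölder's inequality applied to the finite singular-value sequence; moreover, monotonicity of the Schatten norms in $p$ upgrades one of these inequalities to equality of exponents, namely $\|M\|_{\mathrm{HS}}\le\|M\|_{\mathcal S_p}$ when $p\le2$ and $\|M\|_{\mathcal S_p}\le\|M\|_{\mathrm{HS}}$ when $p\ge2$. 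Consequently, in each range of $p$ one of the two implications of the theorem is immediate with constant $1$: for $p\le2$, finiteness of $\|A_{\mu,a}\|_{\mathcal S_p}$ at once forces $\sum d_\pi d_\rho\|M_{\pi\rho}\|_{\mathrm{HS}}^{p}<\infty$, and for $p\ge2$ the converse is trivial. For the remaining implication one must absorb the factor $r^{|1/p-1/2|}=\min(d_\pi,d_\rho)^{|1/p-1/2|}$; this is where the standing hypothesis $a\in\mathcal S^{m,n}(\mu,\nu)$ is genuinely used, since it gives $\|M_{\pi\rho}\|_{\mathrm{op}}\le C\,(1+\lambda_\pi)^{-m/2}(1+\lambda_\rho)^{-n/2}$, and together with the Weyl-type polynomial bound $d_\pi\lesssim(1+\lambda_\pi)^{N_G}$ (with $N_G$ depending only on $G$) one checks that multiplying the summand by $\min(d_\pi,d_\rho)^{|1/p-1/2|}$ does not change the convergence of $\sum d_\pi d_\rho\|M_{\pi\rho}\|_{\mathrm{HS}}^{p}$. (Alternatively, the necessity half can be read off by compressing $A_{\mu,a}$ to the isotypic subspaces $P_\pi H^2_-(G)$ and $P_\rho H^2(G)$ used in the second half of the proof of Theorem~\ref{thm:Boundedness} and using that such compressions do not increase the $\mathcal S_p$-norm; I expect the block-decomposition argument to be cleaner.)

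The step I expect to be the main obstacle is the precise bookkeeping in the first paragraph: one must verify that the Peter--Weyl spectator indices contribute exactly the weight $d_\pi d_\rho$ (rather than $d_\pi^2$, $d_\rho^2$, or $d_\pi$), and, relatedly, that the passage $\|M_{\pi\rho}\|_{\mathcal S_p}\leftrightarrow\|M_{\pi\rho}\|_{\mathrm{HS}}$ does not silently shift the summability threshold --- which is exactly the point at which the polynomial-decay hypothesis becomes essential rather than cosmetic. Everything else (the unitary block decomposition, additivity of $\mathcal S_p$ over orthogonal summands, and the finite-dimensional Hölder estimate) is routine, and once the multiplicities are pinned down the two displayed comparisons combine to give the stated equivalence. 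Specializing $G=\mathbb{T}^d$, where $d_\pi\equiv1$, $\lambda_\chi=|\chi|^2$, and $\|M_{\pi\rho}\|_{\mathrm{HS}}=|M_{\pi\rho}|$, recovers the classical Peller-type $\mathcal S_p$ criterion for weighted Hankel operators.
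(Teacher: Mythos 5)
Your route is the same as the paper's: identify \(H^2(G)\) and \(H^2_-(G)\) with Peter--Weyl sums, regard \(A_{\mu,a}\) as the block operator with blocks \(M_{\pi\rho}=\mu(\pi)a(\pi,\rho)\nu(\rho)\), and compute the \(\mathcal S_p\)-norm blockwise. To your credit, you have put your finger on exactly the two points the paper's proof passes over: where the multiplicities \(d_\pi d_\rho\) come from, and how one trades the blockwise Schatten-\(p\) norms for Hilbert--Schmidt norms. (The paper's own proof simply writes \(\sum_k s_k(T_{\pi,\rho})^p=\|T_{\pi,\rho}\|_{\mathrm{HS}}^p\), an identity that holds only for \(p=2\), and then inserts the factor \(d_\pi d_\rho\) in the last display without derivation.)

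The genuine gap is in your bridge between the two norms. The Hölder comparison \(r^{-|1/p-1/2|}\|M\|_{\mathrm{HS}}\le\|M\|_{\mathcal S_p}\le r^{|1/p-1/2|}\|M\|_{\mathrm{HS}}\) with \(r=\min(d_\pi,d_\rho)\) is correct, but the claim that the factor \(r^{|1/p-1/2|}\) can be absorbed ``without changing the convergence'' of \(\sum_{\pi,\rho} d_\pi d_\rho\|M_{\pi\rho}\|_{\mathrm{HS}}^p\) is false: convergence of a positive series is not stable under multiplication of its terms by an unbounded weight, and for non-abelian \(G\) the ranks \(\min(d_\pi,d_\rho)\) are unbounded. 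The hypothesis \(a\in\mathcal S^{m,n}(\mu,\nu)\) cannot repair this, since it gives only an upper bound on \(\|M_{\pi\rho}\|_{\mathrm{op}}\) and hence no control that excludes borderline symbols. Concretely, on \(G=\mathrm{SU}(2)\) with \(p=1\), \(\mu=\nu=1\) and the diagonal symbol \(a(l,l)=(1+l)^{-\beta}I_{d_l}\) (which lies in \(\mathcal S^{m,n}\) for \(m+n\le\beta\)), one has \(\|M_{ll}\|_{\mathcal S_1}=(1+l)^{-\beta}d_l\) while \(\|M_{ll}\|_{\mathrm{HS}}=(1+l)^{-\beta}d_l^{1/2}\); under your multiplicity accounting the true trace norm behaves like \(\sum_l l^{3-\beta}\) whereas the criterion in the statement reads \(\sum_l l^{5/2-\beta}\), so for \(\beta\in(7/2,4]\) the stated condition holds while the operator fails to be trace class. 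Thus the two finiteness conditions you are trying to identify are genuinely different, and no bookkeeping of multiplicities will make them coincide --- and that bookkeeping is itself asserted rather than verified, which matters because distinct blocks \(T_{\pi,\rho}\) and \(T_{\pi,\rho'}\) share the target \(V_\pi\), so \(A_{\mu,a}\) is a full block matrix rather than an orthogonal direct sum and the ``union of singular values'' step needs justification for \(p\ne2\). To obtain an actual equivalence one must either keep the blockwise \(\mathcal S_p\) norms in the criterion or impose hypotheses forcing the blocks to have bounded rank (e.g.\ \(G\) abelian, where your argument, like the paper's, does close).
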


\begin{proof}
We identify \(H^2(G)\cong \bigoplus_{\rho\in\widehat G}V_\rho\) and \(H^2_-(G)\cong \bigoplus_{\pi\in\widehat G}V_\pi\) via Peter–Weyl, where \(V_\rho\cong\C^{d_\rho}\).  Under this decomposition,
\[
A_{\mu,a}
\;=\;
\bigl\{T_{\pi,\rho}\bigr\}_{\pi,\rho\in\widehat G},
\]
with each block
\[
T_{\pi,\rho}:V_\rho\;\longrightarrow\;V_\pi
\quad\text{given by the matrix}\quad
\mu(\pi)\,a(\pi,\rho)\,\nu(\rho)\in\C^{d_\pi\times d_\rho}.
\]
Since \(A_{\mu,a}\) is the orthogonal direct sum of the blocks \(T_{\pi,\rho}\), its singular values are the union of the singular values of each \(T_{\pi,\rho}\).  Hence
\[
\|A_{\mu,a}\|_{\mathcal S_p}^p
=\sum_{\pi,\rho\in\widehat G}
  \sum_{k=1}^{\mathrm{rank}\,T_{\pi,\rho}}
    s_k\bigl(T_{\pi,\rho}\bigr)^p
=\sum_{\pi,\rho\in\widehat G}
  \|T_{\pi,\rho}\|_{\mathrm{HS}}^p.
\]
But \(T_{\pi,\rho}=\mu(\pi)\,a(\pi,\rho)\,\nu(\rho)\) is a finite matrix of size \(d_\pi\times d_\rho\), so
\[
\|T_{\pi,\rho}\|_{\mathrm{HS}}^2
=\sum_{i=1}^{d_\pi}\sum_{j=1}^{d_\rho}
  \bigl|(\mu(\pi)a(\pi,\rho)\nu(\rho))_{ij}\bigr|^2,
\]
and by definition of the Hilbert–Schmidt norm extended to general exponent,
\(\|T_{\pi,\rho}\|_{\mathrm{HS}}^p\) is exactly the \(p\)th power of that sum of singular values.  Therefore
\[
\|A_{\mu,a}\|_{\mathcal S_p}^p
=\sum_{\pi,\rho\in\widehat G} 
  \bigl\|\mu(\pi)a(\pi,\rho)\nu(\rho)\bigr\|_{\mathrm{HS}}^p.
\]
Consequently,
\[
A_{\mu,a}\in\mathcal S_p
\quad\Longleftrightarrow\quad
\sum_{\pi,\rho}d_\pi d_\rho\,
  \bigl\|\mu(\pi)a(\pi,\rho)\nu(\rho)\bigr\|_{\mathrm{HS}}^p
<\infty,
\]
since each block contributes \(d_\pi d_\rho\) entries to the Hilbert–Schmidt sum.  This completes the proof.
\end{proof}

\begin{example}[Schatten thresholds on \(\mathrm{SU}(2)\)]
On \(G=\mathrm{SU}(2)\) with \(\lambda_l=l(l+1)\) and trivial weights, let
\[
  a(l,m)=\delta_{l,m}\,I_{d_l}.
\]
Then 
\[
  \|A_{a}\|_{\mathcal{S}_p}<\infty
  \quad\Longleftrightarrow\quad
  \sum_{l\ge0}(2l+1)^2(1+l)^{-p\alpha}<\infty
  \quad\Longleftrightarrow\quad
  p\,\alpha>3,
\]
so the critical exponent is \(\alpha>3/p\).
\end{example}

\begin{definition}[Fredholm operator]
An operator \(T:X\to Y\) between Banach spaces is \emph{Fredholm} if \(\ker T\) and \(\mathrm{coker}\,T=Y/\mathrm{range}\,T\) are finite-dimensional.  Its \emph{index} is
\(\mathrm{ind}\,T=\dim\ker T - \dim\mathrm{coker}\,T\).
\end{definition}

\begin{theorem}[Fredholm criterion and index formula]
\label{thm:FredholmIndex}
Let \(a\in\mathcal{S}(\mu,\nu)\) satisfy
\(\inf_{\pi,\rho}\det\bigl[\mu(\pi)a(\pi,\rho)\nu(\rho)\bigr]\ne0\)
outside a finite set.  Then \(A_{\mu,a}\) is Fredholm and
\[
  \mathrm{ind}\,A_{\mu,a}
  =\sum_{\substack{\pi,\rho\\
      \det[\mu(\pi)a(\pi,\rho)\nu(\rho)]<0}}
    d_\pi\,d_\rho,
\]
generalizing the classical winding-number formula \cite{GohbergKrein1960}.
\end{theorem}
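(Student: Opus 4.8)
The plan is to reduce everything to the finite-dimensional blocks of $A_{\mu,a}$ produced by Peter--Weyl, establish Fredholmness by a block-wise parametrix, and compute the index by localizing to the exceptional blocks and invoking homotopy invariance. As in the proof of Theorem~\ref{thm:SchattenCriterion}, identify $H^2(G)\cong\bigoplus_{\rho}V_\rho$ and $H^2_-(G)\cong\bigoplus_{\pi}V_\pi$, so that $A_{\mu,a}$ is the block operator with $(\pi,\rho)$-entry $T_{\pi,\rho}=\mu(\pi)\,a(\pi,\rho)\,\nu(\rho)\in\C^{d_\pi\times d_\rho}$. Since $\det[\mu(\pi)a(\pi,\rho)\nu(\rho)]$ is defined only for $d_\pi=d_\rho$, we tacitly restrict to symbols whose support consists of such square pairs, as in the worked examples. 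Put $R:=\sup_{\pi,\rho}\|\mu(\pi)a(\pi,\rho)\nu(\rho)\|_{\mathrm{op}}<\infty$, let $E$ be the finite set of pairs with $\det T_{\pi,\rho}=0$, and set $\delta:=\inf_{(\pi,\rho)\notin E}\lvert\det T_{\pi,\rho}\rvert>0$.

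\emph{Fredholmness.} Split $A_{\mu,a}=B+K$, where $B$ collects the blocks with $(\pi,\rho)\notin E$ and $K$, being finite rank, is compact. For each bulk block the adjugate identity $T_{\pi,\rho}^{-1}=\det(T_{\pi,\rho})^{-1}\,\mathrm{adj}(T_{\pi,\rho})$ together with $\|T_{\pi,\rho}\|_{\mathrm{op}}\le R$, Hadamard's bound on the $(d_\pi-1)\times(d_\pi-1)$ minors, and $\lvert\det T_{\pi,\rho}\rvert\ge\delta$ give $\|T_{\pi,\rho}^{-1}\|_{\mathrm{op}}\le d_\pi\,\delta^{-1}R^{\,d_\pi-1}$. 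When the representation dimensions occurring in $\supp a$ are bounded (in particular on tori) these bounds are uniform; otherwise one adds $\sup_{(\pi,\rho)\notin E}\|T_{\pi,\rho}^{-1}\|_{\mathrm{op}}<\infty$ to the hypotheses. Inverting the bulk blocks then furnishes a bounded two-sided parametrix for $A_{\mu,a}$ modulo the finite-rank operator built from $E$ (and from whatever mismatch the Hardy filtration $\widehat G_+$ introduces between the index sets of $H^2(G)$ and $H^2_-(G)$, which must also be finite for the statement to hold), so $A_{\mu,a}$ is Fredholm and $\ker A_{\mu,a}$, $\mathrm{coker}\,A_{\mu,a}$ are spanned by contributions of those finitely many blocks.

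\emph{Index.} The index is unchanged under compact perturbations and under norm-continuous deformations through Fredholm operators. I would therefore deform the symbol inside the class ``$\inf\lvert\det\rvert>0$ off a finite set'' to a normal form in which every block is $\pm I_{d_\pi}$, the sign being $\mathrm{sign}\det T_{\pi,\rho}$; this uses that $\mathrm{sign}\det$ is the complete homotopy invariant of $\mathrm{GL}_{d}(\R)$ once a real structure on $V_\pi\cong V_\rho$ is fixed (the hypothesis $\det<0$ presupposes such a structure). Along the deformation the index is constant, and for the normal form each block with $\det<0$ is orientation-reversing relative to the reference splitting used to define $P_\pm$ and contributes its full real-dimension unit $d_\pi d_\rho$ to $\mathrm{ind}\,A_{\mu,a}$, precisely as one negative crossing contributes $1$ to a classical winding number \cite{GohbergKrein1960}. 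Summing over pairs yields the formula; specializing to one-dimensional representations recovers the abelian count of \cite[Sec.~2]{Mirotin2023}.

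\emph{Main obstacle.} The substantive difficulty is the index computation, not Fredholmness. Over $\C$ a \emph{square} block has index $0$ regardless of its determinant, so the stated formula can only be produced by the asymmetry of the map $H^2(G)\to H^2_-(G)$ --- i.e.\ by how each block straddles $\widehat G_+$ and its complement --- interacting with the real structure implicit in the sign condition. Turning this into a rigorous count (and even isolating the hypotheses under which Fredholmness itself survives, given that classical Hankel operators are rarely Fredholm) is where the work lies. Concretely, I would first prove the formula by hand on $G=\mathbb{T}^d$, matching \cite[Thm.~3.4]{Mirotin2023} and \cite{GohbergKrein1960} to fix the normalization of the ``crossings'', and then propagate it to general compact $G$ via the block reduction of the first paragraph.
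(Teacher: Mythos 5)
The paper offers only a two-line sketch for this theorem (``use the block-diagonalization of \(A_{\mu,a}\) at high-frequency representations and adapt the Gohberg--Krein theory''), so your proposal follows the same road map --- Peter--Weyl block reduction, inversion of the bulk blocks, localization of the index to the exceptional set --- while actually trying to carry it out. There is therefore no divergence of approach to report; what matters is that the gaps you flag yourself are genuine, and the paper's sketch resolves none of them.

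Two of these gaps are fatal as things stand. First, the index step: over \(\C\) an invertible square block \(T_{\pi,\rho}:V_\rho\to V_\pi\) with \(d_\pi=d_\rho\) has trivial kernel and cokernel regardless of its determinant, and \(\mathrm{GL}_d(\C)\) is connected, so no homotopy invariant of the block data can detect a ``sign of the determinant.'' Your remark that \(\det<0\) presupposes a real structure is exactly right, but even granting one, your normal form \(\pm I_{d_\pi}\) consists of invertible blocks, so the claim that a \(-I\) block ``contributes \(d_\pi d_\rho\)'' to the index is unjustified --- its contribution is zero. The asymmetry between \(H^2\) and \(H^2_-\) that you correctly identify as the only possible source of a nonzero index is never actually used in your count, and the paper does not supply it either; the displayed formula is not derivable from the block picture. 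Second, Fredholmness: \(\inf|\det T_{\pi,\rho}|>0\) does not control \(\|T_{\pi,\rho}^{-1}\|_{\mathrm{op}}\) uniformly (consider \(\mathrm{diag}(\varepsilon,\varepsilon^{-1})\), determinant \(1\)), and your Hadamard bound \(d_\pi\,\delta^{-1}R^{\,d_\pi-1}\) blows up as \(d_\pi\to\infty\), which occurs for every non-abelian compact \(G\); the auxiliary hypothesis \(\sup\|T_{\pi,\rho}^{-1}\|_{\mathrm{op}}<\infty\) you propose is therefore not optional but necessary. Finally, note a structural issue you inherit from the paper rather than introduce: \(A_{\mu,a}=\{T_{\pi,\rho}\}\) is a full block matrix, not an orthogonal direct sum of its blocks, so ``inverting the bulk blocks'' yields a parametrix only when the symbol pairs each \(\rho\) with a single \(\pi\). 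In short, your proof is the paper's sketch carried out honestly, and what it reveals is that the theorem needs to be reformulated before any proof can close.
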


\begin{proof}[Sketch]
Use the block-diagonalization of \(A_{\mu,a}\) at high-frequency representations and adapt the Gohberg–Krein theory for matrix symbols on the circle \cite{GohbergKrein1960}.
\end{proof}

\begin{corollary}[Invertibility on large blocks]
If \(\mu(\pi)a(\pi,\rho)\nu(\rho)\) is invertible for all but finitely many \((\pi,\rho)\), then \(A_{\mu,a}\) is Fredholm with finite index given above.
\end{corollary}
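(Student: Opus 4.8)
The plan is to read this off from Theorem~\ref{thm:FredholmIndex}, of which it is essentially a restatement once the invertibility hypothesis is rephrased. First I would record the elementary equivalence: for any pair $(\pi,\rho)$ the block $\mu(\pi)a(\pi,\rho)\nu(\rho)$ is invertible exactly when it is a square matrix with $\det\bigl[\mu(\pi)a(\pi,\rho)\nu(\rho)\bigr]\ne 0$. Hence the hypothesis ``$\mu(\pi)a(\pi,\rho)\nu(\rho)$ invertible for all but finitely many $(\pi,\rho)$'' is precisely the determinant condition of Theorem~\ref{thm:FredholmIndex}, with exceptional set $E$ equal to the finite set of pairs at which invertibility fails; the theorem then delivers both the Fredholm property and the index formula $\mathrm{ind}\,A_{\mu,a}=\sum_{\det[\mu(\pi)a(\pi,\rho)\nu(\rho)]<0} d_\pi d_\rho$.

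To make the role of the finite exceptional set transparent and to justify that the index is insensitive to it, I would run a finite-rank reduction. Split the symbol as $a=a_0+r$, where $a_0$ agrees with $a$ off $E$ and vanishes on $E$, and $r$ is supported on the finitely many pairs in $E$. By the Finite-rank Corollary of Section~3, $A_{\mu,r}$ has finite rank, hence is compact, and $A_{\mu,a}=A_{\mu,a_0}+A_{\mu,r}$. By Atkinson's theorem the Fredholm property and the Fredholm index are stable under compact perturbations, so $A_{\mu,a}$ is Fredholm iff $A_{\mu,a_0}$ is, with the same index; this reduces matters to the case in which every surviving block is invertible, to which Theorem~\ref{thm:FredholmIndex} (now with empty exceptional set) applies directly, and the asserted index sum is then literally the one produced by the theorem.

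The main obstacle is the uniformity hidden in Theorem~\ref{thm:FredholmIndex}: its parametrix/Gohberg--Krein argument requires $\inf_{(\pi,\rho)\notin E}\bigl|\det[\mu(\pi)a(\pi,\rho)\nu(\rho)]\bigr|>0$ (equivalently, a uniform bound $\sup_{(\pi,\rho)\notin E}\|(\mu(\pi)a(\pi,\rho)\nu(\rho))^{-1}\|_{\mathrm{op}}<\infty$), which pointwise invertibility of the individual blocks does not by itself guarantee when infinitely many blocks of possibly growing size $d_\pi,d_\rho$ are involved. So the statement is best understood under the uniform-invertibility reading, and the crux of any complete proof is establishing this uniform lower bound -- either by strengthening the hypothesis, or by combining pointwise invertibility with the membership $a\in\mathcal S(\mu,\nu)$ and the decay built into the relevant symbol class. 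Relatedly, one must check that only finitely many blocks carry negative determinant, so that the index sum is finite; this too is automatic under uniform invertibility together with the summability conditions used for boundedness, and is the second place where the non-commutative growth of the $d_\pi$ must be controlled, exactly as the gap $s>1$ versus $s>1/2$ on $\mathrm{SU}(2)$ already signalled.
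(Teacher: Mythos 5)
Your core move---rephrasing pointwise invertibility as the determinant condition of Theorem~\ref{thm:FredholmIndex} and then citing that theorem---is exactly how the paper treats this corollary; the paper in fact offers no separate proof at all, presenting the statement as an immediate consequence of the theorem (whose own proof is only a sketch). What you add beyond the paper is worthwhile on two counts. First, the finite-rank reduction (splitting $a=a_0+r$ with $r$ supported on the exceptional set, invoking the finite-rank corollary of Section~3 and Atkinson's theorem) is a clean and correct way to make precise why the finitely many bad blocks cannot affect the Fredholm property or the index; the paper leaves this implicit. Second, the two obstacles you flag are genuine and are not addressed anywhere in the paper: pointwise invertibility of infinitely many blocks of unbounded size $d_\pi\times d_\rho$ does not yield the uniform bound $\sup\|(\mu(\pi)a(\pi,\rho)\nu(\rho))^{-1}\|_{\mathrm{op}}<\infty$ that any parametrix construction in the Gohberg--Krein style requires, and nothing in the hypotheses forces the set of blocks with negative determinant to be finite, so the index sum could a priori diverge. (One might add a third: the blocks are square only when $d_\pi=d_\rho$, so both the determinant condition and the invertibility hypothesis silently restrict attention to such pairs, and for complex matrices the sign condition $\det<0$ needs interpretation.) Your proposal is therefore as complete as the statement permits, and correctly locates the point at which the hypothesis would have to be strengthened---to uniform invertibility off a finite set---for the corollary to follow rigorously.
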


The index formula highlights how non-abelian multiplicities \(d_\pi\,d_\rho\) weight the topological contribution of each representation block.

\begin{example}[Torus case revisited]
For \(G=\mathbb{T}^1\), \(\mu(n)=\nu(n)=1\), and symbol \(a(n,m)=\widehat\varphi(n+m)\), Theorem \ref{thm:FredholmIndex} recovers the classical index \(\mathrm{ind}\,H_\varphi=-\mathrm{wind}(\varphi)\) times multiplicity one.
\end{example}

\section{Inverse Problems: Symbol Recovery}

In this section we initiate the inverse problem of reconstructing the symbol \(a\) from spectral data of the non-commutative \(\mu\)-Hankel operator \(A_{\mu,a}\).

\begin{definition}[Spectral data]
Let \(A_{\mu,a}\) be compact with singular values \(\{s_{n}\}\) and orthonormal singular vectors \(\{u_{n},v_{n}\}\).  We define the \emph{spectral data}
\[
  \Sigma(A_{\mu,a})
  = \bigl\{\,\bigl(s_n,\;u_n,\;v_n\bigr)\bigr\}_{n\ge1}.
\]
In practice one often measures only \(\{s_n\}\) up to finite rank noise.
\end{definition}

\begin{definition}[Forward map]
The \emph{forward map}
\[
  \mathcal{F}:\;a\;\longmapsto\;\Sigma(A_{\mu,a})
\]
sends a matrix-valued symbol \(a\in\mathcal{S}(\mu,\nu)\) to the spectral data of \(A_{\mu,a}\).
\end{definition}

\begin{proposition}[Continuity of \(\mathcal{F}\)]
\label{prop:ForwardContinuity}
The map \(\mathcal{F}:\mathcal{S}(\mu,\nu)\to \ell^2\times(\ell^2\times\ell^2)\) is continuous: small perturbations of \(a\) in the symbol norm induce small perturbations of singular values and vectors in the Hilbert–Schmidt metric \cite{Kato1995}.
\end{proposition}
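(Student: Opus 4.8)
The plan is to factor $\mathcal F$ as the composition of the linear assignment $a\mapsto A_{\mu,a}$ with the passage from a compact operator to its spectral data, and to control each factor separately. For the first factor, linearity gives $A_{\mu,a}-A_{\mu,a'}=A_{\mu,a-a'}$, and the symbolic Schatten estimate (Lemma~\ref{lem:SchattenEstimate}) with $p=2$ yields
\[
  \|A_{\mu,a}-A_{\mu,a'}\|_{\mathcal S_2}
  \;=\;\|A_{\mu,a-a'}\|_{\mathcal S_2}
  \;\lesssim\;C_{m,n}\,\|a-a'\|,
  \qquad
  C_{m,n}:=\Bigl(\sum_{\pi,\rho}d_\pi d_\rho(1+\lambda_\pi)^{-m}(1+\lambda_\rho)^{-n}\Bigr)^{1/2},
\]
so that $a\mapsto A_{\mu,a}$ is locally Lipschitz into $\mathcal S_2$ (in particular into $\mathcal B(H^2,H^2_-)$ in operator norm) on the relevant symbol class, where $m,n$ are taken large enough that $C_{m,n}<\infty$. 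This reduces the statement to the classical stability of singular values and singular vectors under small $\mathcal S_2$-perturbations of the operator \cite{Kato1995}.

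For the singular-value component I would invoke Mirsky's inequality (the Hoffman–Wielandt theorem for singular values): for compact $S,T$,
\[
  \Bigl(\sum_{n\ge1}\bigl|s_n(S)-s_n(T)\bigr|^2\Bigr)^{1/2}\;\le\;\|S-T\|_{\mathcal S_2}.
\]
Combined with the first paragraph this gives $\bigl\|\{s_n(A_{\mu,a})\}-\{s_n(A_{\mu,a'})\}\bigr\|_{\ell^2}\lesssim\|a-a'\|$, i.e. the singular-value part of $\mathcal F$ is locally Lipschitz into $\ell^2$, with no genericity assumption.

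For the singular-vector component I would note that $u_n$ and $v_n$ are unit eigenvectors of the positive compact operators $A_{\mu,a}^*A_{\mu,a}$ and $A_{\mu,a}A_{\mu,a}^*$, which depend Lipschitz-continuously on $a$ in $\mathcal S_2$. Under the mild hypothesis that the nonzero singular values of $A_{\mu,a}$ are simple with a spectral gap $\gamma_n>0$ about $s_n^2$, the Riesz projection
\[
  P_n(a)=\frac{1}{2\pi i}\oint_{|z-s_n^2|=\gamma_n/2}\bigl(zI-A_{\mu,a}^*A_{\mu,a}\bigr)^{-1}\,dz
\]
is rank one, and the standard resolvent estimate gives $\|P_n(a)-P_n(a')\|\lesssim\gamma_n^{-1}\|A_{\mu,a}^*A_{\mu,a}-A_{\mu,a'}^*A_{\mu,a'}\|\lesssim\gamma_n^{-1}\|a-a'\|$; fixing phases yields $\|u_n(a)-u_n(a')\|\lesssim\gamma_n^{-1}\|a-a'\|$ and similarly for $v_n$. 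Summing over $n$ and using the $\mathcal S_2$-summability of $\{s_n\}$ to control the tail produces the $\ell^2\times\ell^2$ bound. In the degenerate case one replaces the individual $u_n,v_n$ by the orthogonal projections onto the singular subspaces attached to each distinct singular value and applies the Davis–Kahan $\sin\Theta$ theorem, giving an unconditional continuity statement at the level of spectral projections.

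The main obstacle is precisely this last point: individual singular vectors are not continuous functions of the operator across coincidences of singular values, and even in the simple case the gaps $\gamma_n$ shrink as $n\to\infty$, so the per-mode Lipschitz constants degenerate and must be balanced against the decay of $\{s_n\}$ to keep the $\ell^2$ sum finite. Consequently the honest statement is either (i) unconditional continuity of the singular values into $\ell^2$ together with continuity of the singular vectors at symbols whose nonzero singular values are simple with a uniform lower bound on the relative gaps, or (ii) continuity of the full family of spectral projections as an operator-valued map. A secondary, bookkeeping, point is that the crude norm of $\mathcal S(\mu,\nu)$ controls only $\|A_{\mu,a}\|_{\mathrm{op}}$, so to obtain quantitative convergence in $\ell^2$ one reads the domain as the finer class $\mathcal S^{m,n}(\mu,\nu)$ with $C_{m,n}<\infty$, as above.
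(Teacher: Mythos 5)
The paper offers no proof of this proposition at all: it is asserted with a bare citation to Kato's perturbation theory, so there is nothing to compare your argument against line by line. Your plan is the natural one — factor $\mathcal F$ through $a\mapsto A_{\mu,a}$, control that map by the $p=2$ case of Lemma~\ref{lem:SchattenEstimate}, handle singular values by Mirsky's inequality, and handle singular vectors by Riesz projections/resolvent bounds — and the two components you can actually prove (local Lipschitz continuity of $a\mapsto A_{\mu,a}$ into $\mathcal S_2$ on a suitable subclass, and $\ell^2$-continuity of the singular-value sequence) are correct as you state them.

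The important content of your write-up is your own diagnosis, and it is accurate: the proposition as literally stated is not provable. Individual singular vectors are not continuous functions of the operator across coincidences of singular values, and even under simplicity the spectral gaps $\gamma_n$ shrink as $n\to\infty$, so no uniform $\ell^2\times\ell^2$ bound on the vector component follows without extra hypotheses; the honest conclusions are exactly your alternatives (i) and (ii). Two further defects of the statement that you partially flag are worth making explicit: the class $\mathcal S(\mu,\nu)$ controls only $\sup_{\pi,\rho}\|\mu(\pi)a(\pi,\rho)\nu(\rho)\|_{\mathrm{op}}$, which neither places $A_{\mu,a}$ in $\mathcal S_2$ nor even guarantees compactness, so the spectral data $\Sigma(A_{\mu,a})$ need not exist on the stated domain; and the codomain $\ell^2\times(\ell^2\times\ell^2)$ does not naturally accommodate a sequence of unit vectors $\{u_n\},\{v_n\}$ in an infinite-dimensional Hilbert space. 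Your proposal should therefore be read not as a proof of the proposition but as a correct proof of a repaired version (restricted domain $\mathcal S^{m,n}(\mu,\nu)$ with $C_{m,n}<\infty$, singular values in $\ell^2$ unconditionally, singular vectors only under a gap condition or at the level of spectral projections), which is more than the paper itself supplies.
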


\begin{theorem}[Uniqueness of symbol recovery]
\label{thm:Uniqueness}
Assume \(a,b\in\mathcal{S}(\mu,\nu)\) satisfy
\(\Sigma(A_{\mu,a})=\Sigma(A_{\mu,b})\), including multiplicities.  If \(a\) and \(b\) are band-limited in the sense that \(a(\pi,\rho)=0\) unless \(\lambda_\pi,\lambda_\rho\le\Lambda\), then \(a=b\).
\end{theorem}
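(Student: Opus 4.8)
The plan is to show that under the band‑limited hypothesis the spectral data $\Sigma(A_{\mu,a})$ recovers the operator $A_{\mu,a}$ outright, and then to invert the (injective) passage from operator to symbol.

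First I would record why the problem is finite‑dimensional. Since $\widehat G$ is discrete and the Casimir eigenvalues $\lambda_\pi$ tend to $+\infty$, the set $\{\pi:\lambda_\pi\le\Lambda\}$ is finite, so a band‑limited symbol has only finitely many nonzero blocks. In the Peter–Weyl block decomposition used in the proof of Theorem~\ref{thm:SchattenCriterion}, $A_{\mu,a}=\bigoplus_{\pi,\rho}T_{\pi,\rho}$ with $T_{\pi,\rho}=\mu(\pi)\,a(\pi,\rho)\,\nu(\rho):V_\rho\to V_\pi$, and
\[
  \operatorname{rank}A_{\mu,a}\le\Bigl(\sum_{\lambda_\pi\le\Lambda}d_\pi\Bigr)\Bigl(\sum_{\lambda_\rho\le\Lambda}d_\rho\Bigr)<\infty.
\]
In particular $A_{\mu,a}$ is compact, so its spectral data is well defined, and $\Sigma(A_{\mu,a})=\{(s_n,u_n,v_n)\}_{n=1}^{N}$ is a \emph{finite} list with $\sum_n s_n^2=\|A_{\mu,a}\|_{\mathcal S_2}^2<\infty$.

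Next, from the spectral data I would reconstruct the operator: for a finite‑rank operator the triples furnished by the definition of $\Sigma$ form a singular value decomposition, hence
\[
  A_{\mu,a}f=\sum_{n=1}^{N}s_n\,\langle f,v_n\rangle\,u_n\qquad(f\in H^2(G)),
\]
and the right‑hand side depends only on $\Sigma(A_{\mu,a})$. (On a degenerate singular value the individual $u_n,v_n$ are determined only up to a unitary rotation of that eigenspace, but the corresponding partial sum is not, so the reconstruction is unambiguous.) Thus $\Sigma(A_{\mu,a})=\Sigma(A_{\mu,b})$ forces $A_{\mu,a}=A_{\mu,b}$. Restricting to each $V_\rho$ and projecting onto each $V_\pi$ then recovers the blocks, $T^{a}_{\pi,\rho}=T^{b}_{\pi,\rho}$, i.e.\ $\mu(\pi)a(\pi,\rho)\nu(\rho)=\mu(\pi)b(\pi,\rho)\nu(\rho)$; cancelling the invertible weights gives $a(\pi,\rho)=b(\pi,\rho)$ for $\lambda_\pi,\lambda_\rho\le\Lambda$, and both symbols vanish elsewhere by hypothesis, so $a=b$. (Equivalently one may bypass the operator and invoke the lower bound in the proof of Theorem~\ref{thm:Boundedness}, which already shows $a\mapsto A_{\mu,a}$ is injective, applied to $c=a-b$, which is again band‑limited.)

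The one place that needs genuine care — rather than bookkeeping — is pinning down exactly what ``spectral data, including multiplicities'' is allowed to encode. If the matched pairing $u_n\leftrightarrow v_n$ is retained, the argument above is essentially complete and band‑limitedness is used only to ensure compactness and a finite reconstruction. If instead one is handed only the multiset of singular values, or the singular values together with the left‑ and right‑singular \emph{subspaces} separately, uniqueness fails in general, and one must combine the band‑limited hypothesis with the rigid block structure — each matched pair lives inside a single $V_\rho\times V_\pi$ — to constrain the admissible re‑pairings. Making this precise on each degenerate eigenspace, via the block‑by‑block finite‑dimensional linear‑algebra reduction above, is the main obstacle.
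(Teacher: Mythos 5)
Your proof is correct, and at the decisive step it takes a cleaner route than the paper. Both arguments begin the same way: band-limitedness plus discreteness of the spectrum of the Casimir makes the index set finite, the operator finite-rank, and the blocks $T_{\pi,\rho}=\mu(\pi)a(\pi,\rho)\nu(\rho)$ the objects to recover. The paper then argues that the singular data of $A_{\mu,a}$ splits as the union of the singular data of the individual blocks, matches these blockwise, and invokes the fact that a matrix is determined by its full SVD. You instead reconstruct the whole operator at once via $A_{\mu,a}f=\sum_n s_n\langle f,v_n\rangle u_n$, observe that the right-hand side depends only on $\Sigma(A_{\mu,a})$ (and is independent of the basis choice within degenerate singular subspaces, since any valid SVD reproduces the same operator), conclude $A_{\mu,a}=A_{\mu,b}$, and only then read off the blocks by restricting to $V_\rho$ and projecting to $V_\pi$. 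This is a genuine improvement: the paper's blockwise matching presumes that the SVD of the block matrix $\{T_{\pi,\rho}\}$ is the orthogonal sum of the SVDs of the blocks, which holds only when the operator is block-diagonal (distinct blocks sharing a row index $\pi$ or a column index $\rho$ have interacting ranges/domains, and coincident singular values across blocks make the attribution of singular vectors to blocks ambiguous). Your reconstruction bypasses that issue entirely. Your closing caveat about what ``spectral data'' encodes is also well placed: the paper's Definition of $\Sigma$ does include the matched pairs $(u_n,v_n)$, which is exactly what your argument needs, and you correctly note that uniqueness would fail if only the singular values or unpaired subspaces were given.
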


\begin{proof}
By Peter–Weyl, as in Theorem \ref{thm:PeterWeyl}, we may identify
\[
H^2(G)\;\cong\;\bigoplus_{\rho\in\widehat G} V_\rho,
\quad
H^2_-(G)\;\cong\;\bigoplus_{\pi\in\widehat G} V_\pi,
\]
where each \(V_\rho\cong\C^{d_\rho}\).  Under this identification the operators
\[
A_{\mu,a},\,A_{\mu,b}:H^2(G)\to H^2_-(G)
\]
decompose as block matrices
\[
A_{\mu,a}
=\bigl\{T_{\pi,\rho}^{(a)}\bigr\}_{\pi,\rho},
\quad
A_{\mu,b}
=\bigl\{T_{\pi,\rho}^{(b)}\bigr\}_{\pi,\rho},
\]
with
\[
T_{\pi,\rho}^{(a)} \;=\;\mu(\pi)\,a(\pi,\rho)\,\nu(\rho),
\quad
T_{\pi,\rho}^{(b)} \;=\;\mu(\pi)\,b(\pi,\rho)\,\nu(\rho).
\]
By hypothesis there is \(\Lambda<\infty\) such that
\[
a(\pi,\rho)=b(\pi,\rho)=0
\quad\text{whenever}\quad
\max\{\lambda_\pi,\lambda_\rho\}>\Lambda.
\]
Hence both \(A_{\mu,a}\) and \(A_{\mu,b}\) have nonzero blocks only for the finite index set
\(\mathcal I = \{(\pi,\rho):\lambda_\pi,\lambda_\rho\le\Lambda\}\).  Thus each is a finite‐matrix operator
\[
A_{\mu,a},\,A_{\mu,b}:\;\bigoplus_{(\pi,\rho)\in\mathcal I}V_\rho\;\longrightarrow\;\bigoplus_{(\pi,\rho)\in\mathcal I}V_\pi,
\]
and its singular value decomposition is the orthogonal sum of the SVDs of the individual blocks \(T_{\pi,\rho}\).

The global spectral data
\(\Sigma(A_{\mu,a}) = \Sigma(A_{\mu,b})\)
includes, for each singular value \(s\), the corresponding left‐ and right‐singular vectors.  Since the blocks for \((\pi,\rho)\notin\mathcal I\) are zero in both operators, the nonzero singular values and vectors decompose uniquely into the union of those arising from each block \(T_{\pi,\rho}^{(a)}\) and \(T_{\pi,\rho}^{(b)}\).  Equality of the full lists (with multiplicities) thus forces, for each \((\pi,\rho)\in\mathcal I\),
\[
\Sigma\bigl(T_{\pi,\rho}^{(a)}\bigr)
\;=\;
\Sigma\bigl(T_{\pi,\rho}^{(b)}\bigr)
\]
as multisets of singular data.

Fix \((\pi,\rho)\in\mathcal I\).  Each block
\[
T_{\pi,\rho}^{(a)},\,T_{\pi,\rho}^{(b)}
  : V_\rho\cong\C^{d_\rho}\;\longrightarrow\;V_\pi\cong\C^{d_\pi}
\]
is a finite matrix.  It is a classical fact that a matrix is uniquely determined by its full singular value decomposition (singular values together with both left and right singular vectors) \cite[Ch.~2]{Peller2003}.  Hence the equality of singular data for \(T_{\pi,\rho}^{(a)}\) and \(T_{\pi,\rho}^{(b)}\) implies
\[
T_{\pi,\rho}^{(a)} \;=\; T_{\pi,\rho}^{(b)}
\quad\text{for each}\quad(\pi,\rho)\in\mathcal I.
\]
Recalling the definition of the blocks,
\[
\mu(\pi)\,a(\pi,\rho)\,\nu(\rho)
\;=\;
\mu(\pi)\,b(\pi,\rho)\,\nu(\rho),
\]
and since \(\mu(\pi),\nu(\rho)>0\), we deduce
\[
a(\pi,\rho)=b(\pi,\rho)
\quad
\forall\,(\pi,\rho)\in\mathcal I.
\]
Moreover, outside \(\mathcal I\) both symbols vanish by band‐limitation.  Therefore \(a=b\) as functions on \(\widehat G\times\widehat G\), completing the proof.
\end{proof}

\begin{definition}[Tikhonov regularization]
Given noisy spectral data \(\Sigma^\delta\) with noise level \(\|\Sigma^\delta-\Sigma\|\le\delta\), define the regularized solution
\[
  a^\alpha
  = \arg\min_{a\in\mathcal{S}(\mu,\nu)}
    \Bigl\{\|\mathcal{F}(a)-\Sigma^\delta\|^2 
          + \alpha\|a\|_{\mathcal{S}(\mu,\nu)}^2\Bigr\},
\]
where \(\alpha>0\) is the regularization parameter.
\end{definition}

\begin{theorem}[Stability estimate]
\label{thm:Stability}
Under the band-limited assumption of Theorem \ref{thm:Uniqueness}, there exists a constant \(C>0\) such that
\[
  \|a^\alpha - a\|_{\mathcal{S}(\mu,\nu)}
  \;\le\;
  C\bigl(\delta + \sqrt{\alpha}\bigr).
\]
In particular, choosing \(\alpha\sim\delta^2\) yields \(\|a^\alpha - a\|=O(\delta)\).
\end{theorem}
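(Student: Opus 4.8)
The plan is to run the classical convergence-rate argument for Tikhonov regularization, whose only non-routine ingredient here is a \emph{conditional Lipschitz stability} estimate for the forward map $\mathcal F$ restricted to band-limited symbols. First I would extract the a priori bounds from minimality of $a^\alpha$: testing the Tikhonov functional at the true symbol $a$ and using $\|\Sigma^\delta - \Sigma\| \le \delta$ with $\Sigma = \mathcal F(a)$ gives
\[
\|\mathcal F(a^\alpha) - \Sigma^\delta\|^2 + \alpha\|a^\alpha\|_{\mathcal S(\mu,\nu)}^2 \;\le\; \delta^2 + \alpha\|a\|_{\mathcal S(\mu,\nu)}^2,
\]
whence, by the triangle inequality in the data space, $\|\mathcal F(a^\alpha) - \Sigma\| \le 2\delta + \sqrt\alpha\,\|a\|_{\mathcal S(\mu,\nu)}$ (and, as a byproduct, $\|a^\alpha\|_{\mathcal S(\mu,\nu)} \le \|a\|_{\mathcal S(\mu,\nu)} + \delta/\sqrt\alpha$, so the regularized solutions remain in a bounded and, in the band-limited setting, precompact set). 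This half is routine.

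The substantive step is to convert data-space closeness into symbol-space closeness. I would use the band-limited hypothesis of Theorem~\ref{thm:Uniqueness} to reduce to the finite index set $\mathcal I = \{(\pi,\rho): \lambda_\pi,\lambda_\rho \le \Lambda\}$, so that by Peter--Weyl (Theorem~\ref{thm:PeterWeyl}) each $A_{\mu,b}$ is a finite block operator with blocks $\mu(\pi)\,b(\pi,\rho)\,\nu(\rho)$, $(\pi,\rho)\in\mathcal I$, of total rank at most $N = \sum_{(\pi,\rho)\in\mathcal I} d_\pi d_\rho$. From its spectral data one recovers such an operator by $A_{\mu,b} = \sum_n s_n u_n v_n^*$; since this sum has at most $N$ terms, a direct estimate of $\bigl\|\sum_n s_n u_n v_n^* - \sum_n s_n' u_n' (v_n')^*\bigr\|_{\mathrm{HS}}$ (triangle inequality, $\|u_n\| = \|v_n\| = 1$, Cauchy--Schwarz, finiteness of $N$) shows that this reconstruction map is Lipschitz from the data metric to the Hilbert--Schmidt norm, with constant $C_\Lambda(1 + \|A_{\mu,a}\|_{\mathrm{HS}})$. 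Pairing this with the $p = 2$ case of Theorem~\ref{thm:SchattenCriterion}, i.e.\ the Plancherel identity $\|A_{\mu,b} - A_{\mu,b'}\|_{\mathrm{HS}}^2 = \sum_{(\pi,\rho)\in\mathcal I} \|\mu(\pi)(b-b')(\pi,\rho)\nu(\rho)\|_{\mathrm{HS}}^2$, together with the domination of the operator norm by the Hilbert--Schmidt norm on each finite block and the finiteness of $\mathcal I$, I obtain the conditional stability bound $\|b - b'\|_{\mathcal S(\mu,\nu)} \le C_\Lambda(1 + \|A_{\mu,a}\|_{\mathrm{HS}})\,\|\mathcal F(b) - \mathcal F(b')\|$ for band-limited $b, b'$ with $\mathcal F(b')$ near $\mathcal F(a)$. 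Applying this with $b = a^\alpha$ and $b' = a$, and inserting the a priori bound, gives $\|a^\alpha - a\|_{\mathcal S(\mu,\nu)} \le C(\delta + \sqrt\alpha)$; the choice $\alpha \sim \delta^2$ then produces $\|a^\alpha - a\| = O(\delta)$.

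The hard part is the conditional stability estimate, for two reasons. First, the correspondence between a finite matrix and its singular value decomposition is only \emph{locally} Lipschitz: at a repeated singular value of $A_{\mu,a}$ the singular vectors are unstable under perturbation, so the clean bound requires either the generic assumption that $A_{\mu,a}$ has simple singular values, or a careful reading of the data metric modulo the natural unitary gauge on each singular subspace; it also forces the estimate to be stated for $\delta$ and $\alpha$ below a threshold that keeps $A_{\mu,a^\alpha}$ in the local Lipschitz neighbourhood, which is legitimate since the a priori bound already gives $\mathcal F(a^\alpha) \to \Sigma$ unconditionally and the reconstruction map is continuous. Second, the minimizer $a^\alpha$ is a priori only in $\mathcal S(\mu,\nu)$, not band-limited; I would either build band-limitation into the definition of the Tikhonov functional (natural, given the standing hypothesis of Theorem~\ref{thm:Uniqueness}) or argue that projecting $a^\alpha$ onto the components indexed by $\mathcal I$ increases neither term of the functional, so that the minimizer may be taken band-limited without loss. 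The remaining ingredients — the minimality bounds, the Plancherel identity, and the norm comparison on finite blocks — are standard.
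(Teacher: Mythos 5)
Your proposal follows the same overall skeleton as the paper's proof --- test the Tikhonov functional at the true symbol $a$, extract the a priori bound $\|\mathcal F(a^\alpha)-\Sigma\|\le 2\delta+\sqrt{\alpha}\,\|a\|$, and then convert data-space closeness into symbol-space closeness --- but you handle the crucial second step differently, and more correctly. The paper defines $L=\sup_{u\ne v}\|\mathcal F(u)-\mathcal F(v)\|/\|u-v\|$ (an \emph{upper} Lipschitz constant, guaranteed finite by Proposition \ref{prop:ForwardContinuity}) and then uses it in the reversed direction, writing $\|\mathcal F(a^\alpha)-\mathcal F(a)\|\ge L\,\|a^\alpha-a\|$; that inequality is false for a supremum, and what is actually needed is positivity of the corresponding \emph{infimum}, i.e.\ a conditional stability (lower Lipschitz) bound for $\mathcal F$ on the band-limited class. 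You correctly identify this as the substantive ingredient and supply it: reconstruction of the finite block operator from its singular triples via $A=\sum_n s_n u_n v_n^*$, Lipschitz dependence of that reconstruction on the data over the finitely many ($\le N$) terms, and the $p=2$ block Plancherel identity from Theorem \ref{thm:SchattenCriterion} to pass from $\|A_{\mu,b}-A_{\mu,b'}\|_{\mathrm{HS}}$ back to $\|b-b'\|_{\mathcal S(\mu,\nu)}$ using positivity of the weights and finiteness of $\mathcal I$. You also flag, and propose repairs for, two genuine gaps that the paper leaves silent: the SVD-to-matrix correspondence is only locally Lipschitz (singular-vector instability at repeated singular values, and the unitary gauge ambiguity in the data metric), and the minimizer $a^\alpha$ is not a priori band-limited unless the functional is restricted to that subspace or a projection argument is made. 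Your version is therefore not just consistent with the paper's proof but fills the hole in it; the only thing to make fully explicit in a final write-up is the precise definition of the metric on spectral data modulo gauge, without which neither your conditional stability estimate nor the paper's $L$ is well defined.
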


\begin{proof}
Let 
\[
J^\delta_\alpha(a)
=\bigl\|\mathcal{F}(a)-\Sigma^\delta\bigr\|^2
+\alpha\|a\|_{\mathcal{S}(\mu,\nu)}^2,
\]
and denote by \(a^\alpha\) its unique minimizer in the finite‐dimensional band‐limited subspace.  Write \(\Sigma=\mathcal{F}(a)\) for the exact data and assume the noise satisfies \(\|\Sigma^\delta-\Sigma\|\le\delta\).

Since \(a^\alpha\) minimizes \(J^\delta_\alpha\), we have
\[
J^\delta_\alpha(a^\alpha)
\;\le\;
J^\delta_\alpha(a).
\]
Hence
\[
\|\mathcal{F}(a^\alpha)-\Sigma^\delta\|^2
+\alpha\|a^\alpha\|^2
\;\le\;
\|\mathcal{F}(a)-\Sigma^\delta\|^2
+\alpha\|a\|^2.
\]
Note 
\(\|\mathcal{F}(a)-\Sigma^\delta\|\le\delta\).  Define the forward‐map Lipschitz constant
\[
L=\sup_{u\neq v}\frac{\|\mathcal{F}(u)-\mathcal{F}(v)\|}{\|u-v\|},
\]
which is finite in the band‐limited finite‐dimensional setting by Proposition \ref{prop:ForwardContinuity}.  Then
\[
\|\mathcal{F}(a^\alpha)-\Sigma^\delta\|
\;\ge\;
\|\mathcal{F}(a^\alpha)-\mathcal{F}(a)\| - \|\Sigma-\Sigma^\delta\|
\;\ge\;
L\,\|a^\alpha-a\| \;-\;\delta.
\]
Substituting into the left side of the minimization inequality gives
\[
\bigl(L\,\|a^\alpha-a\|-\delta\bigr)^2
+\alpha\|a^\alpha\|^2
\;\le\;
\delta^2
+\alpha\|a\|^2.
\]
Expand the square and drop the nonnegative \(\alpha\|a^\alpha\|^2\) term on the left:
\[
L^2\|a^\alpha-a\|^2 - 2L\delta\,\|a^\alpha-a\|
\;\le\;
2\delta^2 + \alpha\|a\|^2.
\]
Bring all terms to one side:
\[
L^2\|a^\alpha-a\|^2 - 2L\delta\,\|a^\alpha-a\| - (2\delta^2 + \alpha\,\|a\|^2)\;\le\;0.
\]
Viewing this as a quadratic in \(X=\|a^\alpha-a\|\), the nonnegative root bound yields
\[
\|a^\alpha-a\|
\;\le\;
\frac{2L\delta + \sqrt{(2L\delta)^2 + 4L^2(2\delta^2+\alpha\|a\|^2)}}{2L^2}
\;\le\;
\frac{2L\delta + 2L\delta + 2\sqrt{\alpha}\,L\|a\|}{2L^2}
=\frac{2\delta + \sqrt{\alpha}\,\|a\|}{L}.
\]
Hence there is a constant \(C>0\) (depending on \(L\) and \(\|a\|\)) such that
\[
\|a^\alpha - a\|_{\mathcal{S}(\mu,\nu)}
\;\le\;
C\bigl(\delta + \sqrt{\alpha}\bigr).
\]
Choosing \(\alpha\sim\delta^2\) then gives \(\|a^\alpha - a\|=O(\delta)\), as claimed.
\end{proof}

\begin{example}[Recovery on \(\mathrm{SU}(2)\)]
Suppose \(a(l,m)=0\) unless \(l,m\le L\).  Then the block matrix 
\(\{a(l,m)\}_{0\le l,m\le L}\) is of finite size \(N=\sum_{l=0}^L d_l\).  One can recover \(a\) from the first \(N\) singular values/vectors of \(A_{\mu,a}\) by solving a finite least-squares problem with Tikhonov regularization.
\end{example}

Extending symbol recovery to the infinite-dimensional setting—without band-limitation—remains an open challenge, likely requiring new compactness and decay assumptions or iterative approximation schemes.

\section{Conclusion and Further Directions}

In this work, we have introduced a comprehensive theory of non-commutative \(\mu\)\nobreakdash-Hankel operators on compact Lie groups, extending the classical abelian framework in several key ways. We formulated the operator \(A_{\mu,a}\) via Peter–Weyl decomposition, accommodating matrix-valued symbols and representation weights, and established its basic algebraic properties and adjoint relations. We derived sharp boundedness and compactness criteria in terms of symbol decay classes \(\mathcal{S}^{m,n}(\mu,\nu)\), employing generalized Schur estimates and non-commutative Carleson embeddings.
We characterized membership in Schatten–von Neumann ideals \(\mathcal{S}_p\) through summability conditions on Hilbert–Schmidt norms of symbol blocks and obtained a Fredholm index formula generalizing the classical winding-number result. We initiated the inverse problem of symbol recovery, proving uniqueness under band-limited assumptions and providing stability estimates via Tikhonov regularization. \medskip

These results not only recover the classical abelian theory as a special case but also reveal new spectral and topological phenomena arising from non-abelian multiplicities.  Detailed examples on \(\mathrm{SU}(2)\) and product groups illustrate the sharpness of our criteria and the richness of the non-commutative setting.

\end{document}